\newtheorem{theorem}{Theorem}
\newtheorem{lemma}[theorem]{Lemma}
\newtheorem{corollary}[theorem]{Corollary}
\theoremstyle{remark}
\newtheorem{remark}[theorem]{Remark}
\theoremstyle{definition}
\newtheorem{example}[theorem]{Example}
\newcommand{\BCH}{\operatorname{BCH}}
\newcommand{\leng}{\ell}
\newcommand{\ptop}{m}
\newcommand{\horbun}{H}
\newcommand{\abel}{^{\mathrm{ab}}}
\newcommand{\deriv}{\mathfrak{d}}
\newcommand{\vf}{\mathfrak{X}}
\newcommand{\dist}{\operatorname{\varrho}}
\newcommand{\distNSW}{\operatorname{\varrho_{\mathrm{NSW}}}}
\newcommand{\afterbar}{\bar{\phantom{x}}}
\newcommand{\fn}[1]{\operatorname{\mathnormal{#1}}}
\newcommand{\dbyd}[3]{\frac{d^{#2} #3}{d#1^{#2}}}
\newcommand{\pdbyd}[3]{\frac{\partial^{#2} #3}{\partial{#1}^{#2}}}
\newcommand{\spdbyd}[3]{\sfrac{\partial^{#2} #3}{\partial{#1}^{#2}}}
\newcommand{\vect}[1]{\smash{\overset{\rightharpoonup}{\rule{0pt}{1.2ex}\smash{{#1}}}}}
\newcommand{\R}{\mathbb{R}}
\newcommand{\Z}{\mathbb{Z}}
\newcommand{\N}{\mathbb{N}}
\newcommand{\Alpha}{\mathrm{A}}
\newcommand{\sfrac}[2]{{#1}/{#2}}
\newcommand{\labs}{\left\vert}
\newcommand{\rabs}{\right\vert}
\newcommand{\lpnorm}{\operatorname{P}\left(}
\newcommand{\rpnorm}{\right)}
\newcommand{\lnorm}{\left\Vert}
\newcommand{\rnorm}{\right\Vert}
\newcommand{\lset}{\left\{}
\newcommand{\rset}{\right\}}
\newcommand{\lpar}{\left(}
\newcommand{\rpar}{\right)}
\newcommand{\Biglpar}{\Bigl(}
\newcommand{\Bigrpar}{\Bigr)}
\renewcommand{\lbrack}{\left[}
\renewcommand{\rbrack}{\right]}
\newcommand{\lip}{\left<}
\newcommand{\rip}{\right>}
\newcommand{\biglpar}{\bigl(}
\newcommand{\bigrpar}{\bigr)}
\newcommand{\rest}[1]{\bigl|_{#1}}
\newcommand{\group}[1]{\mathrm{#1}}
\newcommand{\Lie}[1]{{\mathfrak{#1}}}
\newcommand{\Centre}{\mathfrak{Z}}
\newcommand{\Span}{\operatorname{span}}  
\newcommand{\Exp}{\operatorname{Exp}}
\newcommand{\ad}{\operatorname{ad}}
\newcommand{\Aut}{\operatorname{Aut}}
\newcommand{\Der}{\operatorname{Der}}
\newcommand{\Prol}{\operatorname{Prol}}
\newcommand{\Ad}{\operatorname{Ad}}
\numberwithin{theorem}{section}
\numberwithin{equation}{section}
\begin{document}

\title{Maps of Carnot groups}
\author{Michael G.\ Cowling}
\address{School of Mathematics and Statistics\\ University of New South Wales\\UNSW Sydney 2052\\ Australia}
\author{Alessandro Ottazzi}
\address{CIRM, Fondazione Bruno Kessler\\Via Sommarive 15\\I-38123 Trento\\ Italy}

\title{Global contact and quasi\-conformal mappings \\
of Carnot groups}
\begin{abstract}
 We show that globally defined quasi\-conformal mappings of rigid Carnot groups are affine, but that globally defined contact mappings of rigid Carnot groups need not be quasi\-conformal, and \emph{a fortiori} not affine.
\end{abstract}
\keywords{Carnot groups, quasiconformal mappings, contact mappings}
\subjclass[2010]{primary: 30L10; secondary: 57S20, 35R03, 53C23}

\maketitle
\section{Introduction}

Carnot groups are models for sub-Riemannian manifolds, in much the same way as Euclidean spaces are models for Riemannian manifolds.
The study of particular kinds of mappings on Carnot groups is therefore a prelude to the study of the same kinds of mappings on sub-Riemannian manifolds.
For example, the proof that $1$-quasi\-conformal mappings of Carnot groups are automatically smooth led to a proof that isometric mappings of sub-Riemannian manifolds are automatically smooth (see \cite {Capogna-Cowling, Capogna_LeDonne}) and may well lead to the corresponding result for $1$-quasi\-conformal mappings.
In this paper, we study contact and quasi\-conformal mappings of Carnot groups as a step toward understanding the behaviour of contact and quasi\-conformal mappings of sub-Riemannian manifolds.
In particular, we examine the behaviour of globally defined contact and quasi\-conformal mappings on ``rigid'' Carnot groups.
By rigid Carnot group, we mean one for which the space of contact flows is finite-dimensional; by mapping, we always mean a self-mapping.
Our results show that Carnot groups are more varied than Iwasawa $N$ groups, which are the model groups for ``parabolic geometries'' \cite{Cap-Slovak}, and illustrate the fact that sub-Riemmannian manifolds come in many more varieties than parabolic geometries.

The first examples of Carnot groups are the Euclidean space $\R^n$ and the real Heisenberg groups $\group{H}_n(\R)$, which are models for Riemannian geometry and CR geometry respectively.
On the one hand, contact maps are the mappings of CR manifolds that preserve the CR structure; this notion generalises naturally to more general sub-Riemannian geometries.
On the other hand, quasi\-conformal mappings in Euclidean space are now classical and were first studied for Heisenberg groups by A. Kor\'anyi and H. M. Reimann \cite{Koranyi-Reimann1, Koranyi-Reimann2}.
On $\R^n$, the space of quasi\-conformal mappings is infinite-dimensional, while on $\group{H}_n(\R)$ the spaces of contact and of quasi\-conformal mappings are both infinite-dimensional.

Iwasawa $N$ groups, the nilpotent groups that arise in the Iwasawa decomposition $KAN$ of a semi-simple Lie group, are further examples of Carnot groups.
This family of Carnot groups has been studied quite intensively; see, for instance, \cite{CDeKR1,CDeKR2, Freeman0, Ottazzi-Hess, Yamaguchi}.
Other examples of Carnot groups which are reasonably well understood include H-type groups, filiform groups, free nilpotent groups and jet spaces; see, for instance, \cite{Ottazzi-Warhurst-1, Reimann-Htype, Warhurst-filiform, Warhurst-free, Warhurst-jet, Xie-filiform}.

In particular, P. Pansu \cite{Pansu} showed that if there is a locally defined quasi\-conformal mapping between two Carnot groups, then the groups are isomorphic, which reduces the study of mappings between Carnot groups to the study of self-mappings.
He also showed that on the Iwasawa $N$ group associated to $\group{Sp}(n,1)$, smooth contact mappings and not necessarily smooth quasi\-conformal mappings are automatically conformal, and form a finite-dimensional space; this was the key to his celebrated proof of rigidity for  $\group{Sp}(n,1)$.
K. Yamaguchi \cite{Yamaguchi} studied contact mappings on the Iwasawa $N$ groups associated to more general semi-simple Lie groups, and found that for most of these, the space of contact mappings, defined from an arbitrarily small open set  into the whole group, is finite-dimensional.

It is easy to see that, for all rigid Iwasawa $N$ groups, globally defined contact maps are affine; \emph{a fortiori} globally defined quasi\-conformal maps are affine.
A similar result holds for free nilpotent groups \cite{Warhurst-free}, while filiform groups and jet spaces are not rigid.
It therefore seemed plausible that, for rigid Carnot groups, globally defined contact, and \emph{a fortiori} quasi\-conformal, maps must be affine.
In this work, we first give an example of a family of rigid Carnot groups that admit globally defined contact maps which are \emph{not} quasi\-conformal, and then we show that for all rigid Carnot groups, globally defined quasi\-conformal maps \emph{are} affine.

In the rest of this introductory section, we define Carnot groups and contact and quasi\-conformal mappings formally; we explain why globally defined contact mappings of Iwasawa $N$ groups are affine in Section 2.
In Section 3, we exhibit examples of rigid groups for which globally defined contact maps need not be quasi\-conformal, and in Sections 4 and 5 we treat globally defined quasi\-conformal maps.

We point out explicitly here that, while Carnot groups come equipped with a (left-invariant) sub-Riemannian metric, this is not determined uniquely by the stratified group structure, and the space of conformal maps depends on the choice of metric as well as the group.  On the other hand, the spaces of quasi\-conformal and contact mappings only depend on the stratified group structure (though the value of $\lambda$ in $\lambda$ in $\lambda$-quasi\-conformal mapping does depend on the metric).

\subsection{Carnot groups}

We first consider stratified Lie algebras and groups from an algebraic point of view, and then equip a stratified Lie group with the Carnot--Carath\'eodory distance to obtain a Carnot group.
We then define quasi\-conformal maps and prove some preliminary results. Finally, we recall some of the preliminary ideas  of Tanaka prolongation theory.

\subsubsection{Stratified Lie algebras}
Let $\Lie{g}$ be a stratified Lie algebra of step $\leng$.
This means that
\[
\Lie{g}= \Lie{g}_{-1}\oplus \cdots\oplus \Lie{g}_{- \leng},
\]
where $[\Lie{g}_{-j}, \Lie{g}_{-1}] =\Lie{g}_{-j-1}$ when $1\leq j \leq \leng$, while  $\Lie{g}_{-\leng}\neq \{0\}$ and $\Lie{g}_{-\leng-1}=\{0\}$; this implies that $\Lie{g}$ is nilpotent.
We assume that the dimension of $\Lie{g}$ is at least $3$ and finite, to avoid degenerate cases.
Note that the choice of $\Lie{g}_{-1}$ determines the stratification, but the Lie algebra $\Lie{g}$ by itself need not do so.

We write $\Centre(\Lie{g})$ for the centre of $\Lie{g}$; $\pi_k$  for the canonical projection of $\Lie{g}$ onto $\Lie{g}_{-k}$; $d_k$  for $\dim(\Lie{g}_{-k})$; $n_k$ for $d_1 + \dots + d_k$; $n$ for the dimension $\sum_{i=1}^\leng d_j $ of $\Lie{g}$; and $Q$ for the homogeneous dimension $\sum_{i=1}^\leng  j \, d_j $ of $\Lie{g}$.
It is also notationally convenient to set $d_0$ and $n_0$ equal to $0$.
We denote by $\Aut(\Lie{g})$ the group of automorphisms of $\Lie{g}$.
The Lie algebra of $\Aut(\Lie{g})$ is the space of derivations of $\Lie{g}$, which we denote by $\Der(\Lie{g})$.

For each $s \in \R^+$, the dilation $\delta_s \in \Aut( \Lie{g} )$ is defined to be $\sum_{j=1}^{\leng} s^j \pi_j$.
Of course, this definition also makes sense if $s \in \R$, but $\delta_0$ is an endomorphism rather than an automorphism.

For a linear map of $\Lie{g}$, preserving all the subspaces $\Lie{g}_{-j}$ of the stratification is equivalent to commuting with dilations and to having a block-diagonal matrix representation.
We use the adjective 
``strata-preserving'' to describe such maps.
We write $\Aut^\delta(\Lie{g})$ for the subset of $\Aut(\Lie{g})$ of strata-preserving automorphisms and $\Lie{g}_0$ for the space of strata-preserving derivations.

\subsubsection{Stratified Lie groups}
Let $G$ be a stratified Lie group of step $\leng$.
This means that $G$ is connected and simply connected, and its Lie algebra $\Lie{g}$ is stratified with $\leng$ layers.
The identity of $G$ is written $e$, and we view the Lie algebra $\Lie{g}$ as the tangent space at $e$.

Since $G$ is nilpotent, connected and simply connected, the exponential map $\exp$ is a bijection from $\Lie{g}$ to $G$, with inverse $\log$.
We also write $\delta_s$ for the automorphism of $G$ given by $\exp {} \circ {}{\delta_s} \circ {} \log$.
The differential $T \mapsto (T_*)_e$ is a one-to-one correspondence between endomorphisms of $G$  and of $\Lie{g}$, and $T = \exp{} \circ  {T_*}_e \circ {}\log$.

We sometimes use exponential coordinates of the first kind on $G$.
More precisely, we take a basis $\lset X_1, X_2, \dots, X_n\rset$ of $\Lie{g}$ that is adapted to the stratification, by which we mean that $X_{n_{j-1}+1}, \dots, X_{n_j}$ form a basis of the stratum $\Lie{g}_{-j}$ for each $j$, and then associate $\exp(x_1X_1+ x_2X_2 +\dots+ x_nX_n)$ in $G$ to the coordinates $(x_1, \dots, x_n)$ in $\R^n$.
We refer to $x_1$, $x_2$, \dots, $x_{d_1}$ as coordinates of the first layer, and so on.

The Baker--Campbell--Hausdorff formula implies that, for a stratified Lie group of step $\leng$, there is a polynomial $\BCH$ of two variables of degree $\leng$ such that
\[
\exp( X ) \exp ( Y ) = \exp(\BCH(X,Y))
\qquad\forall X, Y \in \Lie{g};
\]
it is known that
\[
\BCH(X, Y) = X + Y + \tfrac{1}{2} [X,Y] + \tfrac{1}{12} [X, [X, Y]] - \tfrac{1}{12} [Y, [Y, X]] + \dots.
\]

A polynomial $p$ on $G$ is said to be \emph{homogeneous} of degree $k$ if $p( \delta_s x) = s^k p(x)$ for all $s \in \R^+$ and all $x \in G$.
Every homogeneous polynomial is a sum of homogeneous monomials, and the homogeneous degree of $x_1^{\alpha_1} \dots x_n^{\alpha_n}$ is $\sum_{k=1}^\leng k \sum_{j = n_{k-1}+1}^{n_k} \alpha_j$.

\subsubsection{Vector fields and flows}
A vector field $\vect V$ on a Carnot group $G$ is said to be \emph{polynomial} if $\vect V = \sum_{i} p_i \tilde X_i$, where the coefficients $p_i$ are polynomials.
Every polynomial vector field may be written as the sum of homogeneous vector fields.
More precisely, we say that a vector field $\vect V$ on $G$ is \emph{homogeneous of degree $k$} if
\[
\vect V(u \circ \delta_s) = s^{-k} (\vect Vu) \circ \delta_s
\quad\forall s \in \R^+ \quad\forall u \in C^\infty(G).
\]
Equivalently,
\[
(\delta_s)_* (\vect V_p) = s^{-k} \vect V_{\delta_s p}
\quad\forall s \in  \R^+  \quad\forall p \in G.
\]
In particular, if $x$ is a coordinate from the $k$th layer, and $X \in \Lie{g}_{-j}$, then the vector fields $x \tilde X$ and $x \breve X$ are homogeneous of degree $k-j$, where $\tilde X$ and $\breve X$  denote
the left-invariant and right-invariant vector fields that coincide with $X$ and $-X$ at the identity.
From the Baker--Campbell--Hausdorff formula, when $n_{k-1} < i \leq n_k$, we may express the left-invariant and right-invariant vector fields $\tilde X_i$ and $\breve X_i$ in exponential coordinates as follows:
\[
\begin{aligned}
\tilde X_i f (x) = \pdbyd {x_i}{}{f(x)} + \sum_{j = n_k+1}^n p_i^j(x) \pdbyd {x_j}{}{f(x)} \\
\breve X_i f (x) =  - \pdbyd {x_i}{}{f(x)} + \sum_{j = n_k+1}^n q_i^j(x) \pdbyd {x_j}{}{f(x)} ,
\end{aligned}
\]
where the polynomials $p_i^j$ and $q_i^j$ are homogeneous and $p_i^j(x) \spdbyd {x_j}{}{}$ and $p_i^j(x) \spdbyd {x_j}{}{}$ are homogeneous of the same degree as $\spdbyd {x_i}{}{}$.

\subsubsection{The horizontal distribution and contact maps}
Throughout, we write $\Omega$ for an arbitrary nonempty connected open subset of $G$.
The differential of a differentiable map $f : \Omega \to G$ is written $f_*$.

We denote by $L_p$ the left translation by $p$ in $G$, that is, $L_p q = pq$ for all $q\in G$.
The subbundle  $\horbun G$ of the tangent bundle $TG$, where $\horbun_p= (L_p)_*(\Lie{g}_{-1})$, is called the \emph{horizontal distribution}.

Using $(L_p)_*$, we may identify $T_e$, the tangent space at $e$ in $G$, itself identified with $\Lie{g}$, with the tangent space $T_p$ at $p$.
Thus if $f$ is differentiable at $p$, then $((L_{f(p)})_*)^{-1}(f_*) (L_p)_*$ is a linear map of $\Lie{g}$.
It is convenient to denote this \emph{linearised total differential} by $\deriv f_p$.

Each $X$ in $\Lie{g}$ induces a left-invariant vector field $\tilde X$,  equal to $(L_p)_*(X)$ at each point $p \in G$.
By definition, the Lie algebra $\tilde{\Lie{g}}$ of all left-invariant vector fields with vector field commutation is isomorphic to $\Lie{g}$, and it inherits the  stratification of $\Lie{g}$.
We write $\vf(G)$ for the space of all $C^1$ vector fields on $G$.
We say that a vector field $\vect V$ lies in the \emph{horizontal subspace} $\vf^H(G)$ of $\vf(G)$ if and only if
\[
{\vect V}_p \in (L_p)_*  \Lie{g}_{-1}
\quad\forall p \in G.
\]

We say that $f \in C^2(\Omega; G)$ is a \emph{contact map} if $f_*$ preserves $\horbun G$; equivalently, $\deriv f_p (\Lie{g}_{-1}) \subseteq  \Lie{g}_{-1}$ for every $p\in \Omega$.
The proof of the following complementary result is immediate.

\begin{lemma}\label{lem:good-automorphisms}
 Suppose that $T$ is a homomorphism of a stratified Lie  group $G$.
 The following are equivalent:
\begin{enumerate}[(i)]
\item
$T$ is a contact map;
\item
$(T_*)_e$ maps $\Lie{g}_{-1}$ into $\Lie{g}_{-1}$;
\item
$(T_*)_e({\Lie{g}}_{-k})  \subseteq \Lie{g}_{-k}$, for each positive integer $k$;
\item
$T$ and $(T_*)_e$ commute with dilations.
\end{enumerate}
\end{lemma}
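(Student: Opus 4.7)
The plan is to prove the chain (i) $\Leftrightarrow$ (ii) $\Leftrightarrow$ (iii) $\Leftrightarrow$ (iv). The key observation, which essentially drives everything, is that if $T$ is a homomorphism then the linearised total differential $\deriv T_p$ does not depend on $p$ and equals $(T_*)_e$. Indeed, differentiating the identity $T \circ L_p = L_{T(p)} \circ T$ at $e$ and using $T(e) = e$ yields
\[
(T_*)_p \circ (L_p)_* = (L_{T(p)})_* \circ (T_*)_e,
\]
so $\deriv T_p = ((L_{T(p)})_*)^{-1} (T_*)_p (L_p)_* = (T_*)_e$ for every $p \in G$. The equivalence (i) $\Leftrightarrow$ (ii) is then immediate, since the pointwise contact condition $\deriv T_p(\Lie{g}_{-1}) \subseteq \Lie{g}_{-1}$ collapses to the single condition at $e$.

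For (ii) $\Leftrightarrow$ (iii), the implication (iii) $\Rightarrow$ (ii) is the case $k = 1$. For (ii) $\Rightarrow$ (iii) I induct on $k$: since $T$ is a homomorphism, $(T_*)_e$ is a Lie algebra homomorphism, and the defining identity $\Lie{g}_{-k} = [\Lie{g}_{-k+1}, \Lie{g}_{-1}]$ of the stratification gives
\[
(T_*)_e \Lie{g}_{-k} = [(T_*)_e \Lie{g}_{-k+1}, (T_*)_e \Lie{g}_{-1}] \subseteq [\Lie{g}_{-k+1}, \Lie{g}_{-1}] = \Lie{g}_{-k}.
\]

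For (iii) $\Leftrightarrow$ (iv), I use that $\delta_s$ acts on $\Lie{g}_{-k}$ as multiplication by $s^k$, so the strata are precisely the joint eigenspaces of the family $\{\delta_s : s \in \R^+\}$, with distinct eigenvalues on distinct strata. Consequently a linear map on $\Lie{g}$ preserves every stratum if and only if it commutes with every $\delta_s$. Transferring to $G$ via the exponential map, which conjugates $(T_*)_e$ to $T$ and the Lie-algebra dilation to the group dilation, gives the corresponding statement for $T$ itself. No step presents a genuine obstacle; the lemma is essentially a bookkeeping consequence of the homomorphism property together with the stratification identity and the eigenspace description of the strata.
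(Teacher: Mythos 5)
Your proof is correct, and it supplies exactly the standard argument the paper has in mind: the paper gives no proof at all, stating only that the result is ``immediate.'' Your three ingredients --- the constancy $\deriv T_p = (T_*)_e$ for homomorphisms (so the contact condition collapses to a condition at $e$), the induction via $[\Lie{g}_{-k+1},\Lie{g}_{-1}] = \Lie{g}_{-k}$ using that $(T_*)_e$ is a Lie algebra homomorphism, and the identification of the strata as the eigenspaces of $\delta_s$ with distinct eigenvalues $s^k$, transferred to $G$ by $T = \exp \circ (T_*)_e \circ \log$ --- are precisely the bookkeeping the authors leave to the reader.
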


A flow is a (reasonably smooth) map $(t,x) \mapsto \phi_t (x)$ of $\R \times G$ to $G$, such that
\begin{equation}\label{eq:defn-of-flow}
\phi_s (\phi_t (x)) = \phi_{s+t} (x)
\quad \forall x \in G \quad\forall s, t \in \R.
\end{equation}
To deal with general mappings, we must also deal with  ``local flows''.
A local flow is a map $(t,x) \mapsto \phi_t (x)$ from $\Upsilon$ into $G$, where $\Upsilon$ is an open subset of $\R \times G$ such that $\Upsilon \cap (\R \times\{p\})$ is an open interval containing $0$ for each $p \in G$, and \eqref{eq:defn-of-flow} holds when both sides make sense.

From the theory of ordinary differential equations, a (local) flow $(t,x) \mapsto \phi_t (x)$ on $\Upsilon$ determines a vector field $\vect V$ on $G$ by the formula
\begin{equation}\label{eq:flow}
\vect V u(p) = \frac{d}{dt} u (\phi_t(p)) \rest{t=0} ;
\end{equation}
conversely, given a vector field  $\vect V$ on $G$, then there is a local flow such that \eqref{eq:flow} holds, at least up to the choice of $\Upsilon$.
We will often use the notation $\Exp(t\vect V)$ for this flow $\phi_t$.
If we can find an interval $I$ containing $0$ such that $\Exp(t\vect V)(q) $ is defined for all $t \in I$ and all $q \in G$, then $\Exp(t\vect V)(q) $ may be defined for all $t \in \R$ and all $q \in G$.

Throughout the article we deal with flows $\phi_t$ that are contact diffeomorphisms for every $t\in I$.
A vector field $\vect V$ is said to be \emph{precontact} if the associated flow is contact.
This is equivalent to asking that
\begin{equation}\label{eq:contact-vf}
[\vect V, \vf^H(G)]\subset \vf^H(G).
\end{equation}
Indeed,  let $\phi_t$ be the flow of a precontact vector field $\vect V$.
Then
$
(\phi_t)_* (\tilde X) \in \vf^H(G)
$
for every $X\in \Lie{g}_{-1}$, and therefore, by definition of Lie derivative \cite{Warner},
$$
[\tilde X,\vect V]=\frac{d}{dt}(\phi_t)_* (\tilde X)\Big|_{t=0} \in \vf^H(G)
\quad\forall X\in \Lie{g}_{-1}.
$$

\subsubsection{The Pansu differential}
We recall that a continuous map $f:\Omega \to G$ is \emph{Pansu differentiable} at $p \in \Omega$ if the limit
\[
 \lim_{s\to 0^+} \delta_s^{-1}\circ L_{f(p)}^{-1}\circ f \circ L_{p}\circ \delta_s(q)
\]
exists, uniformly for $q$ in compact subsets of $G$;  if it exists, then it is a strata-preserving homomorphism of $G$, written $Df_p(q)$.
If $f$ is Pansu differentiable at $p$, then $\log{}\circ Df_p \circ {}\exp$ is a Lie algebra homomorphism, written  $df_p$, and
\[
 df_p(X)=\lim_{s\to 0^+} \log {}\circ \delta_s^{-1}\circ L_{f(p)}^{-1}\circ f \circ L_{p}\circ \delta_t \circ \exp (X)
\]
exists, uniformly for $X$ in compact subsets of $\Lie{g}$.
We call  $Df_p$ the Pansu derivative and $df_p$ the Pansu differential of $f$ at $p$.
By construction, both $Df_p$ and $df_p$ commute with dilations, and so in particular, $df_p$ is a strata-preserving Lie algebra homomorphism.

Note that if $T$ is a strata-preserving automorphism of $G$, then its Pansu derivative ${D}T(p)$ coincides with $T$ at every point, and its Pansu differential $dT(p)$ coincides with the Lie differential $\textrm{log} \circ T \circ \textrm{exp}$ at every point.
Thus our notation is a little different from the standard Lie theory notation, but is not  ambiguous.

The next lemma is well known.

\begin{lemma}\label{lem:Pansu-is-part-of-deriv}
Suppose that $f \in C^2( \Omega, G)$.
Then $f$ is Pansu differentiable in $\Omega$ if and only if $f$ is a contact map.
Further, in this case,
\[
df = \sum_{j=1}^{\leng} \pi_j  \deriv f \pi_j.
\]
\end{lemma}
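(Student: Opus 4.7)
The plan is to analyze the Pansu limit via the Euclidean Taylor expansion of $f$ at $p$ and to exploit the inhomogeneous scaling of $\delta_s$. Set $h(Y) = \log(f(p)^{-1} f(p\exp Y))$, a $C^2$ map defined near $0 \in \Lie{g}$ with $h(0) = 0$ and $h'(0) = \deriv f_p$.

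For the forward direction, suppose $f$ is Pansu differentiable at $p$ and take $X \in \Lie{g}_{-1}$. Writing $\deriv f_p X = \sum_k Y_k$ with $Y_k \in \Lie{g}_{-k}$, the usual Taylor expansion gives $h(sX) = s\,\deriv f_p X + O(s^2)$, so the $\Lie{g}_{-k}$ component of $\delta_s^{-1} h(sX)$ equals $s^{1-k} Y_k + O(s^{2-k})$. For the $s \to 0^+$ limit to exist one needs $Y_k = 0$ whenever $k > 1$, which is exactly the contact condition $\deriv f_p(\Lie{g}_{-1}) \subseteq \Lie{g}_{-1}$.

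For the converse, I first observe that since $f$ is a $C^2$ contact diffeomorphism, the pushforward $f_*$ respects Lie brackets of vector fields, so $f_*(\vf^H(G)) \subseteq \vf^H(G)$ implies that $f_*$ preserves each iterated bracket distribution $\vf^H(G) + [\vf^H(G),\vf^H(G)] + \dots$. Under the left-translation identification, these correspond pointwise to the flag $\Lie{g}_{-1} \subseteq \Lie{g}_{-1}\oplus\Lie{g}_{-2}\subseteq \dots \subseteq \Lie{g}$, hence $\pi_k \deriv f_p \pi_j = 0$ whenever $k > j$, and $\sum_j \pi_j \deriv f_p \pi_j$ is the block-diagonal part of $\deriv f_p$ relative to the stratification.

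For $X \in \Lie{g}_{-j}$, the Taylor expansion of $h$ then yields
\[
\delta_s^{-1} h(s^j X) = \sum_{k \le j} s^{j-k}\, \pi_k \deriv f_p \pi_j X + \delta_s^{-1} R(s^j X),
\]
where $R$ is the Euclidean remainder. The explicit sum converges to $\pi_j \deriv f_p \pi_j X$ as $s \to 0^+$, and the main obstacle is showing $\delta_s^{-1} R(s^j X) \to 0$: the na\"ive bound $R(Y) = O(|Y|^2)$ gives only $O(s^{2j-k})$ in the $\Lie{g}_{-k}$ component, which diverges for $k > 2j$. To close this gap I would use that the contact condition holds at every point of $\Omega$, not merely at $p$; differentiating the relation $\deriv f_q(\Lie{g}_{-1}) \subseteq \Lie{g}_{-1}$ in $q$ forces the higher partial derivatives of the $\Lie{g}_{-k}$ component of $h$ along first-stratum directions to vanish to the appropriate weighted order, so that $\delta_s^{-1} R(s^j X) \to 0$ uniformly on compact sets. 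Summing contributions across the strata of $\Lie{g}$ yields the formula $df_p = \sum_j \pi_j \deriv f_p \pi_j$.
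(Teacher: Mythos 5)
Your forward direction is sound: the blow-up of $s^{1-k}\pi_k \deriv f_p X$ for $k\geq 2$ does force the contact condition, and this part works even for $C^1$ maps. The converse, however, has a genuine gap at exactly the point you flag yourself. To make $\pi_k\,\delta_s^{-1}R(\delta_s X)\to 0$ you need $\pi_k h(\delta_s X)=o(s^k)$, i.e.\ a Taylor expansion of the $\Lie{g}_{-k}$ component of $h$ to \emph{weighted} order $k$; your proposed fix --- ``differentiating the relation $\deriv f_q(\Lie{g}_{-1})\subseteq\Lie{g}_{-1}$ in $q$ forces the higher partial derivatives \dots to vanish'' --- would require classical derivatives of $f$ up to order $k$, and $k$ runs up to the step $\leng$, while $f$ is only assumed $C^2$. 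For any group of step at least $3$ the derivatives you invoke need not exist, so the argument cannot be executed as stated (and even for step $2$ the vanishing is asserted, not proved). The mechanism that actually closes this gap is metric rather than Taylor-theoretic: contact at \emph{every} point of $\Omega$ means that $t\mapsto f(p\exp(tX))$ is a horizontal curve for each $X\in\Lie{g}_{-1}$, and horizontality gives an integral identity for each component $\pi_k h$ along the curve which, using only the continuity of $\deriv f$, yields $\pi_k h(sX)=o(s^k)$ --- equivalently, the remainder is $o(s)$ in the Carnot--Carath\'eodory distance, which is the scale-invariant smallness that survives $\delta_s^{-1}$. This is how the references the paper itself cites proceed (the paper gives no argument of its own: it quotes \cite{Warhurst-2} for $C^1$ maps and \cite{Magnani-Lipschitz} for bi-Lipschitz maps, both strictly stronger statements than the $C^2$ case).

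Two further points. First, Pansu differentiability requires the limit $\delta_s^{-1}h(\delta_s Y)$ to exist uniformly for $Y$ in compact subsets of all of $\Lie{g}$, and the limit is a group homomorphism with respect to $\BCH$, not a linear map; since $h$ is nonlinear, your closing step of ``summing contributions across the strata'' is not legitimate --- convergence along each one-parameter subgroup $\exp(\R X)$ with $X$ in a single stratum does not assemble into the full limit. The standard repair is to prove uniform convergence along horizontal directions and then propagate to all of $G$ by writing $\exp(\delta_s Y)$ as a product of boundedly many horizontal factors $\exp(su_iX_{j_i})$, using that $\Lie{g}_{-1}$ generates $\Lie{g}$. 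Second, your derivation of the filtration preservation $\pi_k\deriv f_p\pi_j=0$ for $k>j$ pushes forward vector fields and their brackets, which presupposes that $f$ is a diffeomorphism; the lemma assumes only $f\in C^2(\Omega,G)$ (constant maps are contact), so this step needs an argument via $f$-related vector fields or via the horizontal-curve estimates above --- the claim itself is true and standard, but not for the reason you give.
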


\begin{proof}
This is true in greater generality.
See \cite{Warhurst-2} for the case of $C^1$ maps and \cite{Magnani-Lipschitz} for the case of bi-Lipschitz maps.
\end{proof}

\subsubsection{The sub-Riemannian metric and the associated distance}
We fix a scalar product $\lip\cdot,\cdot\rip$ on $\Lie{g}$ for which the subspaces $\Lie{g}_{-j}$ are pairwise orthogonal, and write $\lnorm \cdot \rnorm$ for the associated norm.
Using the restriction of this scalar product on $\Lie{g}_{-1}$, we define a left-invariant sub-Riemannian metric on $G$ by the formula
\begin{align}\label{scalarprod}
\lip V, W \rip_p = \lip (L_{p^{-1}})_*(V),(L_{p^{-1}})_*(W) \rip  
\quad\forall V, W \in \horbun_p .
\end{align}


The sub-Riemannian metric gives rise to a left-invariant \emph{sub-Riemannian} or  \emph{Carnot--Carath\'eodory} distance function $\dist$ on $G$.
To define this, we first say that a smooth curve $\gamma$ is  \emph{horizontal} if $\dot\gamma(t)\in \horbun_{\gamma(t)}$ for every $t$.
Then we define the distance $\dist(p,q)$ between points $p$ and $q$ by
\[
\dist(p, q) := \inf\int_0^1 \biglpar \lip  \dot\gamma(t), \dot\gamma(t) \rip_{\gamma(t)} \bigrpar^{1/2}  \,dt ,
\]
where the infimum is taken over all (piecewise) smooth horizontal curves $\gamma$ from $[0, 1]$ to $G$ such that $\gamma(0) = p$ and $\gamma(1) = q$.
The distance function is homogeneous, symmetric and left-invariant, that is,
\[
s^{-1} \dist(\delta_s p, \delta_s q)  = \dist(p, q) = \dist(q, p) = \dist(rq, rp)
\quad\forall p, q, r \in G \quad\forall s \in R^+;
\]
in particular, $\dist(p, q) = \dist(q^{-1}p, e)$.
The stratified group $G$, equipped with the distance $\dist$, is known as a \emph{Carnot group}; we usually omit mention of $\dist$.

There is a natural pseudonorm $\lpnorm \cdot \rpnorm$ on the stratified Lie algebra $\Lie{g}$, given by
\[
\lpnorm X \rpnorm = \Biglpar \sum_{k=1}^{\leng}  \lnorm \pi_k X\rnorm ^{2\leng! / k} \Bigrpar ^{1/ 2\leng!}
\quad\forall X \in \Lie{g},
\]
and an associated left-invariant pseudometric $\distNSW$ on $G$, given by
\[
\distNSW(p,q) = \lpnorm \log(q^{-1}p)\rpnorm
\quad\forall p,q \in G.
\]
A. Nagel, E.~M.~Stein, and S. Wainger \cite{nagelstwe} showed that this pseudometric is equivalent to the Carnot--Carath\'eodory metric, in that
\[
C_1 \dist(p,q) \leq \distNSW(p,q) \leq C_2 \dist(p,q)
\quad\forall p,q \in G
\]
(for suitable positive constants $C_1$ and $C_2$).

\begin{lemma}\label{lem:homo-norm}
There exists a constant $C$ such that
\[
\dist( \delta_s p, p ) \leq C \labs s-1\rabs^{1/\leng} \dist( p, e )
\quad\forall p \in G \quad\forall s \in (0,2).
\]
\end{lemma}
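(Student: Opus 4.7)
The plan is to reduce the sub-Riemannian estimate to an algebraic computation about BCH via the Nagel--Stein--Wainger equivalence $C_1\dist \leq \distNSW \leq C_2\dist$. By left-invariance, $\dist(\delta_s p, p) = \dist(p^{-1}\delta_s p, e)$, and the equivalence reduces the statement to showing that
\[
\lpnorm \BCH(-X, \delta_s X)\rpnorm \leq C' \labs s - 1 \rabs^{1/\leng}  \lpnorm X \rpnorm,
\]
where $X = \log p$, since then $\distNSW(\delta_s p, p) = \lpnorm \log(p^{-1}\delta_s p) \rpnorm = \lpnorm \BCH(-X, \delta_s X)\rpnorm$ and $\distNSW(p,e) = \lpnorm X \rpnorm$.

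The key is an observation in two parts. First, since $\delta_r$ is a Lie algebra automorphism, it commutes with the BCH polynomial: $\BCH(-\delta_r X, \delta_s\delta_r X) = \delta_r \BCH(-X, \delta_s X)$. Applying $\pi_j$ shows that, for fixed $s$, the polynomial $\pi_j \BCH(-X, \delta_s X)$ in the coordinates of $X$ is homogeneous of weighted degree $j$ under $\delta_r$. In conjunction with the inequality $\lnorm \pi_k X \rnorm \leq \lpnorm X \rpnorm ^k$ (immediate from the definition of $\lpnorm \cdot \rpnorm$), this yields a bound
\[
\lnorm \pi_j \BCH(-X, \delta_s X) \rnorm \leq A_j(s)  \lpnorm X \rpnorm^j,
\]
where $A_j$ is a polynomial in $s$. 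Second, the whole expression vanishes identically in $X$ at $s = 1$ because $\BCH(-X, X) = 0$. Hence each $A_j(s)$ is divisible by $s - 1$, and for $s \in (0,2)$ we have $|s-1| \leq 1$, so $A_j(s) \leq C_j |s-1|$ with some constant $C_j$.

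Assembling these layer-by-layer estimates,
\[
\lpnorm \BCH(-X, \delta_s X) \rpnorm^{2\leng!} = \sum_{j=1}^{\leng} \lnorm \pi_j \BCH(-X, \delta_s X) \rnorm^{2\leng!/j} \leq \lpnorm X \rpnorm^{2\leng!} \sum_{j=1}^{\leng} C_j^{2\leng!/j} \labs s-1 \rabs^{2\leng!/j}.
\]
For $|s-1| \leq 1$ and $1 \leq j \leq \leng$, one has $|s-1|^{2\leng!/j} \leq |s-1|^{2\leng!/\leng} = |s-1|^{2(\leng-1)!}$, so the sum is at most $C'' |s-1|^{2(\leng-1)!}$. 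Taking the $(2\leng!)$-th root produces the factor $|s-1|^{1/\leng}$, which combined with the NSW equivalence gives the stated estimate. The main obstacle is the algebraic setup of the second paragraph, namely correctly tracking the homogeneity of $\pi_j \BCH(-X, \delta_s X)$ and its vanishing at $s = 1$; once in place, the NSW norm mechanically converts the layer estimates into the correct $|s-1|^{1/\leng}$-exponent, which is sharp as the $\leng$-th stratum is the one responsible for the weakest scaling.
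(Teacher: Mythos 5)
Your proposal is correct, and its skeleton coincides with the paper's: both reduce via left-invariance and the Nagel--Stein--Wainger equivalence to estimating $\lpnorm \BCH(-X,\delta_s X) \rpnorm$ where $X = \log p$, both exploit that $(X,s)\mapsto\BCH(-X,\delta_s X)$ is polynomial and vanishes identically at $s=1$, and both extract the exponent $1/\leng$ from the worst term $\labs s-1 \rabs^{2\leng!/\leng}$ in the pseudonorm. Where you genuinely diverge is in how scale is handled. The paper observes that the asserted inequality is homogeneous under $p\mapsto\delta_r p$, normalizes to $\dist(p,e)=1$, and then obtains the single uniform bound $\lnorm \BCH(-X,\delta_s X) \rnorm\leq C_0\labs s-1 \rabs$ by compactness of the normalized set in $\log$ coordinates, without any layer-by-layer analysis. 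You instead keep $X$ arbitrary and track weighted homogeneity explicitly: since $\delta_r$ is an automorphism commuting with $\delta_s$, each component $\pi_j\BCH(-X,\delta_s X)$ is weighted-homogeneous of degree $j$ in $X$, which together with $\lnorm\pi_k X\rnorm\leq\lpnorm X \rpnorm^k$ gives the scale-correct layerwise bound $A_j(s)\lpnorm X \rpnorm^j$ with $A_j(s)\leq C_j\labs s-1 \rabs$ on $(0,2)$. Your route is somewhat longer but avoids the normalization-plus-compactness step and yields a fully quantitative estimate valid at every scale simultaneously; the paper's argument is shorter but qualitative, with $C_0$ produced nonconstructively. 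Two cosmetic points in your write-up: the divisibility-by-$(s-1)$ claim should be asserted for the individual coefficient polynomials $c_\alpha(s)$ of the monomials in $\pi_j\BCH(-X,\delta_s X)$ (each vanishes at $s=1$ because distinct monomials in the coordinates of $X$ are linearly independent), with $A_j(s)$ then taken to be $\sum_\alpha \labs c_\alpha(s) \rabs$ rather than itself ``divisible''; and the comparison of a coordinate $x_i$ of the $k$th layer with $\lnorm\pi_k X\rnorm$ holds up to a constant depending on the adapted basis. Neither affects validity.
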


\begin{proof}
The result is obvious if $p = e$, and the formula to be proved is homogeneous, so we may assume that $\dist(p,e) = 1$.

Suppose that $p = \exp(X)$, where $X \in \Lie{g}$.
Now
\[
\begin{aligned}
\dist(\delta_s p , p)
&= \dist(p^{-1} \delta_s p, e) \leq C_1^{-1} \distNSW(p^{-1} \delta_s p, e)  \\
&= C_1^{-1}  \Biglpar \sum_{k=1}^{\leng}  \lnorm \pi_k \BCH(-X, \delta_s X) \rnorm ^{2\leng! / k} \Bigrpar^{1/2 \leng!} .
 \end{aligned}
\]
The map $ (X,s) \mapsto \BCH(-X, \delta_s X)$ from $\Lie{g} \times \R$ to $\Lie{g}$ is polynomial, and vanishes when $s =1$;
hence there exists a constant $C_0$ such that
\[
\lnorm \BCH(-X, \delta_s X) \rnorm \leq C_0  \labs s-1\rabs
\]
for all $s \in (0,2)$ and $X$ such that $\dist(\exp(X),e) \leq 1$.
Since $\pi_k$ is a norm-one projection,
\[
\begin{aligned}
\dist(\delta_s p , p)
&\leq C_1^{-1} \Biglpar \sum_{k=1}^{\leng}  \lpar C_0  \labs s-1\rabs\rpar ^{2\leng! / k} \Bigrpar^{1/2 \leng!}
\leq C \labs s-1 \rabs^{1/\leng},
 \end{aligned}
\]
as required.
\end{proof}

\subsection{Quasiconformal maps}
Take $s \in \R^+$.
In a Carnot group, the distortion $H(f,p,s)$ of a map $f: \Omega \to G$ at a point $p \in \Omega$ at scale $s \in \R^+$ is defined by
\[
H(f,p,s) = \frac{\sup\lset \dist( f(x), f(p) ) : x \in G, \dist(x,p) = s \rset }{ \inf\lset \dist( f(x), f(p) ) : x \in G, \dist(x,p) = s \rset } \,.
\]
(In a general space with a distance, inequalities $ \dist(x,p) \leq s$ and $ \dist(x,p) \geq s$ may be needed.)
The map $f$ is $\lambda$-quasi\-conformal if
\[
\limsup_{s \to 0+}  H(f,p,s) \leq \lambda
\quad\forall p \in \Omega.
\]
We say that $f$ is quasi\-conformal if it is $\lambda$-quasi\-conformal for some $\lambda \in \R^+$.

If the map $f$  is smooth, then it is $\lambda$-quasi\-conformal if and only if its Pansu differential $df_p$ is $\lambda$-quasi\-conformal at all $p \in \Omega$.
A consequence of this is that smooth contact maps are automatically locally quasi\-conformal; global conformality is a stronger condition.
Suppose that $\eta: [0, \infty) \to [0, \infty)$ is a (bijective) homeomorphism (necessarily increasing).
We say that $f$ is $\eta$-quasisymmetric if
\begin{equation}\label{eq;defn-qysm}
 \frac{ \dist( f(q_1), f(p) )  }{  \dist( f(q_2), f(p) )  }  \leq \eta \lpar  \frac{ \dist( q_1, p )  }{  \dist( q_2, p )  }  \rpar
 \quad \forall p, q_1, q_2 \in \Omega
\end{equation}
for some $\eta$ (of course, we require that $q_2 \neq p$ so that the denominators are not $0$).

It is easy to see that quasisymmetry implies quasi\-conformality.
In this paper, we are interested in the structure of quasi\-conformal and quasisymmetric maps in the special case where $\Omega = G$.
A deep theorem of J. Heinonen and P. Koskela \cite{Heinonen-Koskela-Carnot, Heinonen-Koskela-CG} (the first paper treats Carnot groups while the second paper treats more general metric spaces) states that for each $\lambda$ and $G$, there exists a homeomorphism $\eta$ such that every $\lambda$-quasi\-conformal map is $\eta$-quasisymmetric.
From the equivalence of quasi\-conformality and quasisymmetry, it follows that the inverse of a quasi\-conformal map is also quasi\-conformal.

It is classical that quasi\-conformal maps on $\R^n$ which fix three points form a normal family.
Our next lemma extends this slightly and into the context of Carnot groups.
For all $r, R \in \R^+$ such that $r < R$, we define the annulus $\Alpha_{r, R}$ to be $\{ p \in G : r \leq \dist(p,e) \leq R \}$.

\begin{lemma}\label{lem:qcf}
Fix $\lambda \in [1, \infty)$ and $p, p'  \in G \setminus\{e\}$.
Suppose that $(f_k: k\in \N)$ is a sequence of globally defined $\lambda$-quasi\-conformal maps such that $f_k(e) = e$ for all $k \in \N$ and $\lim_{k \to \infty} f_k(p) =p'$.
Then for all  $r, R \in \R^+$ such that $r < R$, there exist $r', R' \in \R^+$ such that $r' < R'$ and $f_k(A_{r,R}) \subseteq \Alpha_{r', R'}$, and an increasing homeomorphism $\omega: [0, 2R] \to [0, S]$ (for some positive $S$) such that
\[
\dist( f_k(q_1) , f_k(q_2) ) \leq \omega (\dist( q_1 , q_2 ))
\quad\forall q_1, q_2 \in \Alpha_{r, R}.
\]
Moreover, for all  $r', R' \in \R^+$ such that $r' < R'$, there exist $r'', R'' \in \R^+$ such that $r'' < R''$ and $f_k^{-1} (A_{r',R'}) \subseteq \Alpha_{r'', R''}$, and an increasing homeomorphism $\omega': [0, 2R'] \to [0, S']$ (for some positive $S'$) such that
\[
\dist( f_k^{-1}(q_1) , f_k^{-1}(q_2) ) \leq \omega' (\dist( q_1 , q_2 ))
\quad\forall q_1, q_2 \in \Alpha_{r', R'}.
\]
Finally, $f_k^{-1}(p') \to p$ as $k \to \infty$.
\end{lemma}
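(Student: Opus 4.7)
The plan is to reduce everything to a single uniform quasisymmetry estimate. By the Heinonen--Koskela theorem just cited, there is a homeomorphism $\eta$ of $[0,\infty)$, depending only on $\lambda$ and $G$, with respect to which every $f_k$ is $\eta$-quasi\-symmetric; an analogous $\eta^\ast$ governs the inverses $f_k^{-1}$, which are themselves uniformly quasi\-conformal. Every assertion will come from plugging $e$ and $p$ (or $p'$) into \eqref{eq;defn-qysm} in various orders, exploiting that $f_k(e)=e$ and that $\dist(f_k(p),e)\to\dist(p',e)\in(0,\infty)$, so in particular that $\dist(f_k(p),e)$ is bounded above and bounded below by a positive constant.

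For the containment $f_k(\Alpha_{r,R})\subseteq\Alpha_{r',R'}$, fix $q\in\Alpha_{r,R}$ and apply \eqref{eq;defn-qysm} centered at $e$ in both orderings of $(q,p)$; the monotonicity of $\eta$, combined with $r\leq\dist(q,e)\leq R$ and the two-sided bounds on $\dist(f_k(p),e)$, produce explicit constants
\[
 r'=\inf_k\dist(f_k(p),e)\big/\eta\bigl(\dist(p,e)/r\bigr),\qquad
 R'=\eta\bigl(R/\dist(p,e)\bigr)\sup_k\dist(f_k(p),e) .
\]
For the equicontinuity, apply \eqref{eq;defn-qysm} centered at $q_1\in\Alpha_{r,R}$ to the pair $(q_2,e)$: since $\dist(q_1,e)\geq r$ and $\dist(f_k(q_1),e)\leq R'$,
\[
 \dist(f_k(q_1),f_k(q_2))\leq R'\,\eta\bigl(\dist(q_1,q_2)/r\bigr),
\]
so $\omega(t):=R'\eta(t/r)$ is the desired increasing homeomorphism on $[0,2R]$ (the upper limit is $2R$ because $\diam\Alpha_{r,R}\leq 2R$).

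The remaining claim $f_k^{-1}(p')\to p$ is the crux; once established, the inverse statements follow by applying the preceding paragraph to the sequence $f_k^{-1}$, which satisfies $f_k^{-1}(e)=e$ and $f_k^{-1}(p')\to p$, with $\eta$ replaced by $\eta^\ast$ and the roles of $p$ and $p'$ swapped. Write $q_k:=f_k^{-1}(p')$, and note $q_k\neq e$ since $p'\neq e$. Quasisymmetry of $f_k$ centered at $e$ applied to $(q_k,p)$ in both orderings yields
\[
 \frac{\dist(p',e)}{\dist(f_k(p),e)}\leq \eta\!\lpar\dist(q_k,e)/\dist(p,e)\rpar
 \quad\text{and}\quad
 \frac{\dist(f_k(p),e)}{\dist(p',e)}\leq \eta\!\lpar\dist(p,e)/\dist(q_k,e)\rpar ,
\]
whose left-hand sides both tend to $1$; this traps $\dist(q_k,e)$ in a fixed compact subinterval of $(0,\infty)$. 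Now apply quasisymmetry centered at $p$ to the pair $(e,q_k)$ (working on the subsequence where $q_k\neq p$, since otherwise $q_k=p$ trivially):
\[
 \eta\!\lpar \dist(e,p)/\dist(q_k,p)\rpar \geq \dist(f_k(p),e)/\dist(f_k(p),p') .
\]
The right-hand side tends to $+\infty$, as its numerator tends to $\dist(p',e)>0$ and its denominator to $0$; since $\eta$ is a homeomorphism of $[0,\infty)$, this forces $\dist(q_k,p)\to 0$. The main obstacle is precisely this final step: the quasisymmetry inequality at $p$ does not by itself rule out subsequences with $q_k\to e$ or $q_k\to\infty$, and it is only the prior compactness bound, obtained from quasisymmetry at $e$, that makes the conclusion $q_k\to p$ rigorous.
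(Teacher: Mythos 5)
Your proof is correct, and its first half --- uniform $\eta$-quasisymmetry from the Heinonen--Koskela theorem, then the containment constants $r', R'$ and the modulus $\omega(t) = R'\,\eta(t/r)$ obtained from the quasisymmetry inequality based at $e$ and at a point of the annulus --- is essentially the paper's argument, down to the constants (yours are in fact written more carefully, with $\inf_k$ and $\sup_k$ where the paper's displayed $r'$ and $R'$ nominally depend on $k$). Where you genuinely diverge is in the second half. The paper never invokes quasisymmetry of the inverse maps as a black box: it extracts $r''$, $R''$ and $\omega'(t) = R''\bigl[\eta^{-1}(r'/t)\bigr]^{-1}$ by inverting $\eta$ inside the forward inequalities \eqref{eq:qsymineq1} and \eqref{eq:qsymineq2}, and only then deduces the convergence claim as a one-line corollary, $\dist(f_k^{-1}(p'), p) = \dist\bigl(f_k^{-1}(p'), f_k^{-1}(f_k(p))\bigr) \leq \omega'\bigl(\dist(p', f_k(p))\bigr) \to 0$. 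You reverse the order: you prove $f_k^{-1}(p') \to p$ first, via the blow-up of $\dist(e, f_k(p))/\dist(p', f_k(p))$ in the quasisymmetry inequality based at $p$, and then recycle the first half for the sequence $f_k^{-1}$, citing that inverses of $\eta$-quasisymmetric homeomorphisms are $\eta^{\ast}$-quasisymmetric. That citation is the one step you should make explicit --- $\eta^{\ast}(t) = \bigl(\eta^{-1}(t^{-1})\bigr)^{-1}$ for $t > 0$ --- and it is precisely the computation the paper performs by hand, so the two routes are materially equivalent: yours buys a cleaner symmetry (one argument applied twice, to $f_k$ and to $f_k^{-1}$) at the price of one extra standard fact, while the paper stays self-contained in $\eta$ and $\eta^{-1}$. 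One small correction to your closing commentary: the compactness trap for $\dist(q_k, e)$ is not actually what makes the conclusion rigorous --- your final inequality alone forces $\dist(q_k, p) \to 0$, since $\eta$ is an increasing bijection of $[0,\infty)$, which already rules out subsequences drifting to $e$ or to infinity --- so that preliminary step is harmless but redundant.
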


\begin{proof}
From the convergence hypothesis,
\[
0 <  \inf\{ \dist( f_k(p), f_k(e) ) : k \in \N \} \leq \sup\{ \dist( f_k(p), f_k(e) ) : k \in \N \}  < \infty.
\]
 From \eqref{eq;defn-qysm} (with the letters changed),
 \begin{equation}\label{eq:qsymineq1}
 \frac{ \dist( f_k(q), f_k(e) )  }{  \dist( f_k(p), f_k(e) )  }  \leq \eta \lpar  \frac{ \dist( q, e )  }{  \dist( p, e )  }  \rpar
\end{equation}
 whence
 \[
 \dist( f_k(q), e ) \leq \dist( f_k(p), e ) \fn\eta \lpar  \sfrac{ R  }{  r }  \rpar  = R',
 \quad
 \]
 say, for all $q \in \Alpha_{r, R}$.
 Similarly,
  \begin{equation}\label{eq:qsymineq2}
 \frac{ \dist( f_k(p), f_k(e) )  }{  \dist( f_k(q), f_k(e) )  }  \leq \eta \lpar  \frac{ \dist( p, e )  }{  \dist( q, e )  }  \rpar
\end{equation}
 whence
 \[
 \dist( f_k(q), e ) \geq \dist( f_k(p), e ) /  \eta \lpar  \sfrac{ R  }{  r }  \rpar  = r',
 \]
say, for all $q \in \Alpha_{r, R}$.

To find $\omega$, observe that from \eqref{eq;defn-qysm},
\[
 \frac{ \dist( f_k(q_1), f_k(q_2) )  }{  \dist( f_k(e), f_k(q_2) )  }  \leq \eta \lpar  \frac{ \dist( q_1, q_2 )  }{  \dist( e, q_2 )  }  \rpar
 \quad \forall q_1, q_2 \in G \setminus \{e\} ,
\]
so $ \dist( f_k(q_1), f_k(q_2) ) \leq R' \fn\eta \lpar  \sfrac{ \dist( q_1, q_2 )  }{  r  }  \rpar $ for all $q_1, q_2 \in \Alpha_{r, R}$, and we may take $\omega(t)$ to be $R' \fn\eta \lpar  \sfrac{ t  }{  r  }  \rpar$.

Next, from \eqref{eq:qsymineq1},
\[
\frac{r'}{ \sup \{ \dist( f_k(p), f_k(e) ) : k \in \N \} }  \leq \frac{ \dist( f_k(q), e )  }{  \dist( f_k(p), e )  }  \leq \eta \lpar  \frac{ \dist( q, e )  }{  \dist( p, e )  }  \rpar ,
\]
whence
\[
\dist( q, e )
\geq \dist( p, e) \fn\eta^{-1} \lpar   \frac{r'}{ \sup \{ \dist( f_k(p), f_k(e) ) : k \in \N \} }   \rpar  = r'',
\]
say, whenever $f_k(q) \in \Alpha_{r',R'}$.
Further, from \eqref{eq:qsymineq2},
\[
\frac{ \inf \{ \dist( f_k(p), f_k(e) ) : k \in \N \} }{R'}   \leq \frac{ \dist( f_k(p), e )  }{  \dist( f_k(q), e )  }  \leq \eta \lpar  \frac{ \dist( p, e )  }{  \dist( q, e )  }  \rpar ,
\]
whence
\[
\dist( q, e )
\leq \dist( p, e) \lbrack \fn\eta^{-1} \lpar   \frac{ \inf \{ \dist( f_k(p), f_k(e) ) : k \in \N \} }{R'}    \rpar \rbrack^{-1} = R'',
\]
say, whenever $f_k(q) \in \Alpha_{r',R'}$.

To find $\omega'$, observe that from \eqref{eq;defn-qysm},
\[
 \frac{  \dist( f_k(e), f_k(q_2) )  }{ \dist( f_k(q_1), f_k(q_2) )  }  \leq \eta \lpar  \frac{  \dist( e, q_2 )  }{ \dist( q_1, q_2 )  }  \rpar
 \quad \forall q_1, q_2 \in G \setminus \{e\} ,
\]
so
\[
 \dist( q_1, q_2 ) \leq R'' \lbrack \fn\eta^{-1} \lpar   \frac{  r'  }{ \dist( f_k(q_1), f_k(q_2) )  }  \rpar \rbrack^{-1}
 \]
for all $q_1, q_2 \in f_k^{-1}(\Alpha_{r', R'})$, and we may take $\omega'(t)$ to be $R'' \lbrack \fn\eta^{-1} \lpar   \sfrac{  r'  }{ t  }  \rpar \rbrack^{-1} $.

Finally, we may choose $r$ and $R$ such that $p \in \Alpha_{r, R}$, so $f_k(p) \in \Alpha_{r', R'}$ for all $k \in\N$.
Then
\[
\dist(f_k^{-1}(p'), p ) = \dist(f_k^{-1}(p'), f_k^{-1}(f_k(p)) ) \leq \omega'(p', f_k(p) ) \to 0  \qquad\text{as $k \to \infty$},
\]
as required.
\end{proof}

 The functions $\omega$ and $\omega'$ in the lemma above are called moduli of continuity.


\subsection{Tanaka prolongation}

The idea behind Tanaka prolongation  is the ``method of flows''.
Tanaka prolongation aims to find the Lie algebra of vector fields satisfying some geometric condition by a purely algebraic process, which amounts to an analysis of the Taylor series of the vector fields.
For example, we might seek to describe the vector fields that generate isometric flows, or conformal flows (see \cite{LeDonne-Ottazzi, Cowling-Ottazzi}), in order to understand isometric or conformal maps by the automorphisms that they induce on the corresponding space of flows by conjugation.

In \cite{Tanaka2}, N.~Tanaka introduced the prolongation $\Prol(\Lie{g},\Lie{g}_0)$ of a stratified Lie algebra through a subalgebra $\Lie{g}_0$ of $\Der^\delta(\Lie{g})$.
The prolongation  has the following properties:
\begin{enumerate}[(P1)]
\item     $\Prol(\Lie{g},\Lie{g}_0)=\sum_{i = -\leng}^{\ptop} \Lie{g}_i$ is a graded Lie algebra and $\Lie{g} = \sum_{i = -\leng}^{-1}\Lie{g}_i$;
\item     if $U\in\Lie{g}_k$ where $k\geq 0$ and $[U,\Lie{g}_{-1}]=0$, then $U=0$;
\item     $\Prol(\Lie{g},\Lie{g}_0)$ is maximal among the Lie algebras satisfying (P1) and (P2).
\end{enumerate}
Tanaka showed that the prolongation is well-defined, and also gave an algorithm to find it.

In (P1), the upper limit $\ptop$ may be a natural number or $+\infty$.
From Tanaka's algorithm, it is clear that all the vector spaces $\Lie{g}_{j}$ (where $j \in \Z$) are finite-dimensional, and so $\Prol(\Lie{g},\Lie{g}_0)$ is finite-dimensional if and only if $m$ is finite.
In this case, we say that the prolongation is finite.


In this paper, we consider the maximal prolongation, where $\Lie{g}_0= \Der^\delta(\Lie{g})$. 
When  $\Prol(\Lie{g}, \Lie{g}_0 )$ is finite, the corresponding Carnot group $G$ is said to be \emph{rigid}.
In this case, there is a graded isomorphism between $\Prol(\Lie{g}, \Lie{g}_0)$ and the Lie algebra of all precontact vector fields, and these are polynomial (see \cite{Yamaguchi}).  More precisely, homogeneous vector fields of degree $k$ correspond  to vectors in $\Lie{g}_k$.
Upper case letters $U$ and $V$ will be used to indicate vectors in $\Prol(\Lie{g},\Lie{g}_0)$, while $\breve U$, $\breve V$ will indicate the corresponding vector fields.
In particular, if $U \in \Lie{g}$, then
\[
(\breve U)u(p) = \frac{d}{dt} u(\exp(-tU)p)\rest{t=0}
\]
for all smooth functions $u$ on $G$.


\section{Carnot groups associated to semisimple Lie groups}
In this short section, to be consistent with the standard notation for semisimple Lie groups, we write $G$ for a semisimple Lie group and $N$ (rather than $G$) for the nilpotent part of a parabolic subgroup.
Let us recall that if $G$ is a noncompact semisimple Lie group, with Cartan involution $\Theta$, then every parabolic subgroup is conjugate to one whose Langlands decomposition  $P = MAN$ satisfies $\Theta(MA) = MA$.
We take such a subgroup $P$, and write $\bar P$ for $\Theta P$.

The Bruhat decomposition of $G$ shows that  $N$ may be identified with the dense open subset $N \bar P$ of the homogeneous space $G/\bar P$.
The maps $n\bar P \mapsto gn \bar P$, where $g \in G$, form an open subset of the set of all contact maps from $N \bar P$ to $G/\bar P$ (there may be issues such as orientability that prevent the $G$ action from exhausting the space of contact maps), unless $G$ is a simple Lie group of real rank one or two (see \cite{Yamaguchi} for a more precise description of the exceptions).
Again from the Bruhat decomposition, the only elements of $G$ that map $N\bar P$ into $N\bar P$ are those in $P$.
So in this case, globally defined contact maps are automatically affine.

A similar discussion holds for multi-contact maps in the higher rank case.
See \cite{CDeKR2}, which addresses the question of whether $G$ exhausts the space of mappings.

\section{Global contact maps}
In this section we establish the existence of global contact maps that are not globally quasi\-conformal on certain rigid Carnot groups.
For nonrigid Carnot groups, the existence of such maps is known; see, for instance, \cite{Koranyi-Reimann1}.

Let $\Lie{g}$ be a $n$-dimensional stratified Lie algebra of step $\leng$ and let $\{ X_1, \dots, X_{n} \}$ be a basis adapted to the stratification.
Denote by $c_{ij}^k$  the structure constants of $\Lie{g}$ with respect to this basis, that is,
\begin{equation}\label{eq:original-structure-constants}
[X_i,X_j]=\sum_k c_{ij}^k X_k.
\end{equation}

Next, consider two copies of $\Lie{g}$, with bases $\{ Y_1,\dots,Y_n \}$ and $\{ Z_1,\dots,Z_n \}$, and denote by $\Lie{h}$ the semidirect product $\Lie{g}\abel  \rtimes \Lie{g}$; this is the vector space with basis $\{ Y_1,\dots,Y_n , Z_1,\dots,Z_n \}$, with the Lie bracket defined by linearity, antisymmetry, and the relations
\begin{equation}\label{eq:semi-direct-structure-constants}
[Y_i,Y_j]= 0 , \qquad [Y_i,Z_j]=\sum_{k=1}^n c_{ij}^k Y_k \qquad\text{and}\qquad [Z_i,Z_j]= \sum_k c_{ij}^k Z_k
\end{equation}
when $1 \leq i,j \leq n$.
Equip the semidirect product $\Lie{h}$ with a stratification as follows.
Set $\Lie{h}_{-k} = \Span\{ Y_i, Z_j : n_{k-1}  < i, j \leq n_k\}$ when $1 \leq k \leq \leng$, and for convenience set $\Lie{h}_{-\leng-1} = \{0\}$.
We may verify by induction that $[\Lie{h}_{-1}, \Lie{h}_{-k}] = \Lie{h}_{-k-1} $ when $1 \leq k \leq \leng$.

The corresponding connected, simply connected and stratified semidirect product group is denoted by $\Lie{g}\abel  \rtimes  G$, or just $H$, and
the canonical projection from $H$ to $G$ is denoted by $\pi$.
Then the typical element of $H$ is written $(Y,z)$, where $Y \in \Lie{g}\abel $ and $z \in G$, and the group operations are given by
\[
\begin{aligned}
(Y, z) (Y', z') = (Y + \Ad(z) Y', zz') , \\
(Y, z)^{-1} = (- \Ad(z)^{-1} Y, z^{-1})
\end{aligned}
\]
Further, $\pi(Y, z) = z$.

Given a $C^1$ vector field $\vect V$ on $G$, we define a vector field $\tau(\vect V)$ on $H$ as follows.
If $\vect V = \sum_{i=1}^n v_i \tilde X_i $, where each $v_i \in C^1(G)$,  then
 \[
 \tau(\vect V) = \sum_{i=1}^n (v_i \circ \pi) \tilde Y_i .
 \]

\begin{lemma}\label{global-contact}
If the vector field $\vect V$ on $G$ is precontact, then the vector field $\tau(\vect V)$ on $H$ just defined is also precontact and its flow is global.
Further, $\deriv (\Exp(t \tau(\vect V)))$ is given by
\[
\begin{aligned}
 \deriv (\Exp(t \tau(\vect V)))(Y) &= Y    \\
 \deriv (\Exp(t \tau(\vect V)))(Z) &= Z +  t  \ad(Z)( W \circ \pi) + t\tilde Z (W \circ \pi)
\end{aligned}
\]
for all $Y \in \Lie{g}$ and all $Z \in \Lie{g}\abel $; here $W$ denotes the $\Lie{g}\abel $-valued function $\sum_{i=1}^n v_i  Y_i$ on $G$.
\end{lemma}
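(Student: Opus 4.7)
The plan is to compute the flow of $\tau(\vect V)$ explicitly using the semidirect structure of $H$, verify the contact property by a short commutator calculation, and then read off $\deriv\phi_t$ by direct differentiation. Since left multiplication in $H$ is $(a,g)(b,h)=(a+\Ad(g)b,gh)$, one finds $\tilde Y_i|_{(a,g)}=(\Ad(g)Y_i,0)$ and $\tilde Z_i|_{(a,g)}=(0,(L_g)_*X_i)$, where $X_i\in\Lie{g}$ is the element of the original Lie algebra matched to $Z_i$. Hence $\tau(\vect V)|_{(a,g)}=(\Ad(g)W(g),0)$ has no $G$-component, and the flow equations $\dot g=0$, $\dot a=\Ad(g_0)W(g_0)$ integrate for all $t\in\R$ to
\[
\Exp(t\tau(\vect V))(a_0,g_0) = \bigl(a_0 + t\,\Ad(g_0)W(g_0),\,g_0\bigr),
\]
which is in particular globally defined.

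For the precontact property, I would check \eqref{eq:contact-vf} against the horizontal basis $\{\tilde Y_i,\tilde Z_i:i\leq d_1\}$. Because $\tilde Y_i$ is tangent to the fibres of $\pi$, it kills any function of the form $v_k\circ\pi$, and $[Y_k,Y_i]=0$ in $\Lie{h}$, so $[\tau(\vect V),\tilde Y_i]=0$. For $\tilde Z_i$, using $\tilde Z_i(u\circ\pi)=(\tilde X_iu)\circ\pi$ together with the bracket $[Y_k,Z_i]=\sum_l c_{ki}^l Y_l$ from \eqref{eq:semi-direct-structure-constants}, a short expansion yields
\[
[\tau(\vect V),\tilde Z_i] = \sum_l \Bigl(\sum_k c_{ki}^l (v_k\circ\pi) - (\tilde X_iv_l)\circ\pi\Bigr)\tilde Y_l,
\]
which is precisely the $\tau$-lift of $[\vect V,\tilde X_i]$; since $\vect V$ is precontact on $G$, the coefficients with $l>d_1$ vanish, and the bracket is horizontal on $H$.

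Finally, to compute $\deriv\phi_t$ at $p=(a_0,g_0)$, I observe that the $G$-component of $\phi_t(p)$ equals that of $p$, so $(L_{\phi_t(p)})_*^{-1}$ acts block-diagonally as $\Ad(g_0)^{-1}$ on the $\Lie{g}\abel$ factor and as $(L_{g_0})_*^{-1}$ on the $\Lie{g}$ factor. For a vector in the abelian ideal, $\phi_t$ translates $a_0$ by an amount depending only on $g_0$, so $\deriv\phi_t$ fixes it. For a vector $Z$ in the second copy of $\Lie{g}$, identified with $X\in\Lie{g}$, the product rule applied to $\Ad(g_0\exp(sX))W(g_0\exp(sX))$ produces the two contributions $\Ad(g_0)\ad(X)W(g_0)$ and $\Ad(g_0)(\tilde XW)(g_0)$; stripping off the outer $\Ad(g_0)$ via $(L_{\phi_t(p)})_*^{-1}$ delivers exactly the formula in the statement.

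The main obstacle is really just bookkeeping across the three Lie algebras $\Lie{g}$, $\Lie{g}\abel$, and $\Lie{h}$: once the semidirect formulas for left multiplication and for the bracket are in hand, each part of the lemma reduces to a single standard computation, and the precontact condition on $G$ lifts to $H$ without having to introduce any new Tanaka-prolongation machinery.
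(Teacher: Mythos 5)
Your proposal is correct and takes essentially the same approach as the paper: you verify the precontact condition by exploiting that the structure constants of $\Lie{h}$ in \eqref{eq:semi-direct-structure-constants} reproduce those of $\Lie{g}$, you integrate the flow using $\pi_*(\tau(\vect V))=0$ to obtain $\Exp(t\tau(\vect V))(a_0,g_0)=(a_0+t\Ad(g_0)W(g_0),g_0)$ globally, and you compute $\deriv\Exp(t\tau(\vect V))$ by differentiating the left-translated flow. The only difference is cosmetic: the paper expands the translated group products $(Y(t),z)^{-1}\Exp(t\tau(\vect V))((Y(0),z)(Y_1,e))$ and $(Y(t),z)^{-1}\Exp(t\tau(\vect V))((Y(0),z)(0,z_1))$ explicitly before differentiating, whereas you reach the same two terms $t\ad(X)W(g_0)+t(\tilde X W)(g_0)$ directly by the product rule applied to $\Ad(g_0\exp(sX))W(g_0\exp(sX))$.
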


\begin{proof}
Suppose that $1 \leq j \leq d_1$.
Now $\vect V$ is precontact on $G$, so $[{\tilde X}_j , \vect V] \in \vf^H(G)$ by \eqref{eq:contact-vf},  that is,
\[
\sum_{i=1}^n \lpar v_i(x) [ \tilde X_j,  \tilde X_i] + \tilde X_j v_i(x)  \tilde X_i \rpar \in \vf^H(G)
\qquad\forall x \in G.
\]
The structure constants in \eqref{eq:original-structure-constants} and \eqref{eq:semi-direct-structure-constants} coincide and $(v_i \circ \pi)(Y,z) = v_i(z)$, so
\[
\sum_{i=1}^n \lpar (v_i\circ \pi)(Y,z) [ \tilde Z_j,  \tilde Y_i]  + \tilde Z_j (v_i \circ \pi)(Y,z)  \tilde Y_i \rpar \in \vf^H(G)
\qquad\forall (Y,z) \in H;
\]
that is, $[{\tilde X}_j , \tau(\vect V)] \in \vf^H(H)$.
Moreover,
\[
\sum_{i=1}^n \lpar (v_i\circ \pi)(y,z) [ \tilde Y_j,  \tilde Y_i]  + \tilde Y_j (v_i \circ \pi)(y,z)  \tilde Y_i \rpar =0
\qquad\forall (y,z) \in H
\]
since $[ \tilde Y_j,  \tilde Y_i] =0$ and $\tilde Y_j (v_i \circ \pi) = 0$.
Thus $[{\tilde Y}_j , \tau(\vect V)] \in \vf^H(H)$.
We deduce that $\tau(\vect V)$ is precontact.

To compute the flow of $\tau(\vect V)$, we first note that
\[
\dbyd {t}{}{} \pi( \Exp(t \tau(\vect V)) (Y,z)) = 0
\]
since $\pi_* (\tau(\vect V)) = 0$.
Thus we may suppose that
\[
\Exp(t \tau(\vect V))(Y,z) = (Y(t) , z) ,
\]
where $z$ is fixed and $Y(t)$ varies in $\Lie{g}\abel $, and then
\[
\dbyd {t}{}{} \Exp(t \tau(\vect V))(Y,z) = \dbyd {t}{}{} (0,z) (\Ad(z)^{-1} Y(t), 0) . 
\]
We obtain the differential equation
\[
\begin{aligned}
\dbyd t{}  {Y(t)}  &= \Ad(z)  W(z) , 
\end{aligned}
\]
which integrates to
\begin{align*}
Y(t)  &=  t \Ad(z) W(z) + Y(0) .
\end{align*}
In particular, $\Exp(t\tau(\vect V))$ is globally defined for all $t \in \R$.

Now we compute $\deriv \Exp(t\tau(\vect V))$.
On the one hand, if $Y_1 \in \Lie{g}\abel $, then
\[
\begin{aligned}
&(Y(t), z)^{-1} \Exp(t \tau(\vect V)) ( (Y(0),z) (Y_1, e) ) \\
&\qquad = ( t \Ad(z) W(z) + Y(0), z)^{-1} \Exp(t \tau(\vect V)) (Y(0) + \Ad(z) Y_1 ,z)  \\
&\qquad = ( - \Ad(z)^{-1} ( t \Ad(z) W(z) + Y(0))  , z^{-1} )  (t  \Ad(z)W(z) + Y(0) + \Ad(z) Y_1  ,z)  \\
&\qquad =  ( -  ( t W(z) +  \Ad(z)^{-1}Y(0))  + \Ad(z)^{-1}   (t  \Ad(z)W(z) + Y(0) + \Ad(z) Y_1)  , e) \\
&\qquad =  ( Y_1  , e) ,
\end{aligned}
\]
while on the other hand, if $z_1 \in G$, then
 \[
\begin{aligned}
&(Y(t), z)^{-1} \Exp(t \tau(\vect V)) ( (Y(0),z) (0, z_1) ) \\
&\qquad = ( t \Ad(z) W(z) + Y(0), z)^{-1} \Exp(t \tau(\vect V)) (Y(0) , zz_1)  \\
&\qquad = ( - \Ad(z)^{-1} ( t \Ad(z) W(z) + Y(0))  , z^{-1} )  (t  \Ad(zz_1)W(zz_1) + Y(0)  ,zz_1)  \\
&\qquad =  ( -  ( t W(z) +  \Ad(z)^{-1}Y(0))  + \Ad(z)^{-1}   (t  \Ad(zz_1)W(zz_1) + Y(0) )  , z_1) \\
&\qquad =  ( -  t W(z)  - \Ad(z)^{-1}Y(0)  +    t  \Ad(z_1)W(zz_1) + \Ad(z)^{-1} Y(0)   , z_1) \\
&\qquad =  (   t  \Ad(z_1)W(zz_1) - t W(z)  , z_1) .
\end{aligned}
\]
We differentiate with respect to $Y_1$ and $z_1$ to find the derivative.
\end{proof}


\begin{lemma}\label{rigid-semidirect-product}
Let $G$ be a Carnot group.
If $G$ is rigid, then so is $\Lie{g}\abel \rtimes G$.
\end{lemma}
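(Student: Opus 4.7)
The plan is to establish rigidity of $H = \Lie{g}\abel \rtimes G$ by showing that the Tanaka prolongation $\Prol(\Lie{h}, \Der^\delta(\Lie{h}))$ is finite-dimensional, using as the main tool the canonical Lie algebra projection $\pi: \Lie{h} \to \Lie{g}$, whose kernel is the abelian graded ideal $\mathfrak{a} := \Lie{g}\abel$, concentrated in the finite range of degrees $-1, \dots, -\leng$.

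First I would extend $\pi$ inductively in degree to a graded Lie algebra homomorphism $\Pi: \Prol(\Lie{h}) \to \Prol(\Lie{g})$. In negative degrees $\Pi_k = \pi|_{\Lie{h}_k}$; in degree $0$, for $D \in \Der^\delta(\Lie{h})$ set $\Pi_0(D) = \pi \circ D|_{\Lie{g}_-}$, which is a derivation of $\Lie{g}$ because $\pi$ is a Lie algebra homomorphism and $[\mathfrak{a}, \Lie{g}] \subseteq \mathfrak{a}$. In positive degrees, extend recursively via the Tanaka bracket, setting $\Pi_k(E)(Z) = \Pi_{k-1}([E, Z])$ for $Z \in \Lie{g}_{-1}$; the derivation property of the output is inherited by induction from that of $\Pi_{k-1}$.

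Next I would bound the kernel $\mathcal{K} := \ker \Pi$. In nonpositive degrees it is a finite-dimensional space, concretely described by $\Lie{g}$-equivariant endomorphisms of $\mathfrak{a}$ (viewed as a $\Lie{g}$-module under $\ad$) together with cocycles in $Z^1(\Lie{g}; \mathfrak{a})$. In positive degrees, the crucial point is that because $\mathfrak{a}$ is abelian and concentrated in finitely many strata, carefully propagating the derivation property through the graded bracket of $\Prol(\Lie{h})$ forces $\mathcal{K}_k$ to vanish once $k$ exceeds a bound depending only on $\leng$. Hence $\mathcal{K}$ is finite-dimensional; since the image $\Pi(\Prol(\Lie{h}))$ lies in the finite-dimensional $\Prol(\Lie{g})$ (by rigidity of $G$), one concludes $\dim \Prol(\Lie{h}) \leq \dim \mathcal{K} + \dim \Prol(\Lie{g}) < \infty$, which is the desired rigidity of $H$.

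The main obstacle I anticipate lies in the second step: pinning down the exact degree bound on $\mathcal{K}$ requires handling derivations of $\Lie{h}$ that do not preserve the ideal $\mathfrak{a}$ (for example the ``swap'' derivation $Y_i \mapsto Z_i$, $Z_i \mapsto 0$ for $i \leq d_1$, which is a nonzero element of $\Der^\delta(\Lie{h})$ taking $\mathfrak{a}$ out of itself), and tracking how such extra pieces of $\Lie{h}_0$ may or may not lift to genuine positive-degree Tanaka elements. The abelian, bounded-stratification nature of $\mathfrak{a}$ is what should ultimately terminate the induction.
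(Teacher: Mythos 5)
Your plan is a genuinely different route from the paper's, but as written it has two gaps, and the second is fatal. First, the extension $\Pi$ to positive degrees is not justified. Your recursion $\Pi_k(E)(Z)=\Pi_{k-1}([E,Z])$ can only be shown to land in $\Prol(\Lie{g},\Lie{g}_0)_k$ if $\Pi$ is compatible with the graded brackets, and it is not: the very ``swap'' derivation you mention, $D\colon Y_i\mapsto Z_i$, $Z_i\mapsto 0$, gives $\Pi_0(D)=\pi\circ D|_{\Lie{g}}=0$ while $\Pi([D,Y_i])=\pi(Z_i)=Z_i\neq 0=[\Pi_0(D),\Pi(Y_i)]$, so $\Pi$ already fails to be a homomorphism in degree $0$ because derivations of $\Lie{h}$ need not preserve the ideal $\mathfrak{a}=\Lie{g}\abel$. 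Hence ``the derivation property of the output is inherited by induction'' does not follow; there is no reason the map $Z\mapsto\Pi_{k-1}([E,Z])$ satisfies the defining identity $u([X,X'])=[u(X),X']+[X,u(X')]$ on all of the negative part of $\Lie{g}$. Second, and more seriously, the assertion that $\mathcal{K}_k$ vanishes ``once $k$ exceeds a bound depending only on $\leng$'' is exactly the content of the lemma, not a step toward it: every graded piece of a Tanaka prolongation is automatically finite-dimensional, so rank--nullity per degree yields nothing, and above the top degree of $\Prol(\Lie{g},\Lie{g}_0)$ one has $\Prol(\Lie{h},\Lie{h}_0)_k=\mathcal{K}_k$, so proving $\mathcal{K}_k=0$ there \emph{is} proving rigidity of $H$. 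Moreover, the abelianness of $\mathfrak{a}$ cuts the wrong way: abelian algebras are the standard source of infinite prolongations (already $\R^n$ is maximally nonrigid), so no bound depending only on $\leng$ can hold; any correct argument must use the $\Lie{g}$-module structure on $\mathfrak{a}$ and the rigidity of $G$ in an essential, quantitative way, which your sketch does not.

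The paper sidesteps all prolongation bookkeeping by invoking the rank-one criterion of \cite[Theorem 1]{Ottazzi_Warhurst}: a Carnot group is nonrigid if and only if some $X\in\Lie{h}_{-1}\otimes_\R\mathbb{C}$ has $\ad X$ of rank $0$ or $1$ on the complexification. Writing $W=Y+Z$ with $Y\in\Lie{g}\abel$, $Z\in\Lie{g}$, one computes
\[
\ad(Y+Z)\Lie{h}=[Y,\Lie{g}]+[Z,\Lie{g}\abel]+[Z,\Lie{g}],
\]
and since the first two summands lie in $\Lie{g}\abel$ while the third lies in $\Lie{g}$: if $Z=0$ the rank of $\ad Y$ equals that of $\ad(\iota Y)$ on $\Lie{g}$ (the structure constants coincide), and if $Z\neq0$ the rank is at least that of $\ad Z$ on $\Lie{g}$; in either case rigidity of $G$ forces rank at least $2$. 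I suggest you rebuild your proof on this criterion --- it converts the statement into a two-line linear-algebra computation, whereas a direct prolongation argument would require you to first reprove something equivalent to that criterion.
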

\begin{proof}
By \cite[Theorem 1]{Ottazzi_Warhurst}, a Carnot group $H$ is nonrigid if and only if it satisfies the rank one condition, namely, there exists $X$ in $\Lie{h}_{-1}\otimes_\R {\mathbb C}$ such that $\ad X$ has rank $0$ or $1$, viewed as an endomorphism of the complexification of $\Lie{h}$.
By hypothesis $G$ does not satisfy the rank one condition. We deduce that neither does $\Lie{g}\abel \rtimes G$.

Define the vector space isomorphism $\iota: \Lie{g}\abel \to \Lie{g}$  by linearity and the requirement that $\iota(Y_j)=Z_j$ when $j=1,\dots,n$.
Next, take any vector $W \in \Lie{h}_{-1}$, and suppose that $W=Y+Z$ where $Y\in \Lie{g}\abel $ and $Z\in \Lie{g}$.
Then
\begin{equation}\label{eq:ad-action}
\ad (Y+Z)\Lie{h}= [Y+Z,\Lie{g}\abel +\Lie{g}]=[Y,\Lie{g}]  +  [Z,\Lie{g}\abel ]  +  [Z,\Lie{g}].
\end{equation}
If $Z=0$, then $\ad (Y) \Lie{h}= \ad (Y) \Lie{g}=\iota^{-1}(\ad \iota(Y)\Lie{g})$, where the last equality comes from \eqref{eq:semi-direct-structure-constants}.
This implies that $\Lie{g}\abel \rtimes G$ does not satisfy the rank one condition.
Assume now that $Z\neq 0$.
Since $[Y,\Lie{g}] + [Z,\Lie{g}\abel ] \subseteq \Lie{g}\abel $ and $[Z,\Lie{g}] \subseteq \Lie{g}$, from
\eqref{eq:ad-action} and the hypothesis on $G$ we conclude that $\Lie{g}\abel \rtimes G$ does not satisfy the rank one condition.
\end{proof}

We  now use the results of this section to exhibit examples of rigid Carnot groups that admit global maps which contact but not quasi\-conformal.
Recall that $\Lie{g}_0= \Der^\delta(\Lie{g})$.

\begin{example}
Suppose that $\Lie{g}$ is a stratified Lie algebra, with the property that $\Prol(\Lie{g},\Der^\delta(\Lie{g}))=\sum_{j=-\leng}^m \Lie{g}_j$, where $m$ is finite and $\Lie{g}_1\neq \{0\}$.
Examples of such $\Lie{g}$ are  nilradicals of  minimal parabolic subalgebras of a simple Lie algebra of real rank at least $3$.
By Lemma \ref{rigid-semidirect-product}, $\Lie{g}\abel \rtimes \Lie{g}$ also has finite prolongation.
Take $V\in \Lie{g}_1$ and write the corresponding precontact vector field $ \vect V $ as $\sum_{i=1}^n v_i\tilde X _i$.
Then the coefficients $v_1,\dots,v_{d_1}$ are polynomials of homogeneous degree $2$ \cite{Yamaguchi}, and so their derivatives are unbounded.
By Lemma \ref{global-contact},  $\tau(\vect V)= \sum_{i=1}^n (v_i\circ \pi) \tilde Y_i$ is a precontact vector field in $\Lie{g}\abel \rtimes G$ with a global contact flow.
Moreover,  $\Exp(t \tau(\vect V))$ is not globally quasi\-conformal from the second part of Lemma \ref{global-contact}.
\end{example}

\section{Homogeneous vector fields}

We say that a vector field $\vect V$ on $G$ is  \emph{pre-Lipschitz} or \emph{pre\-quasi\-conformal} if the associated  flow is Lipschitz or quasi\-conformal.
In the case of Lipschitz and quasi\-conformal flows, we require that these flows be global, and that there be a Lipschitz or a quasi\-conformal constant that is valid for all $\Exp(t\vect V)$; the point is that any contact flow is locally Lipschitz and locally quasi\-conformal.
Note that the group property of global flows implies that a Lipschitz flow is actually bi-Lipschitz, that is, Lipschitz with a Lipschitz inverse.

In this section, we prove the following theorem.
Again, recall that $\Lie{g}_0= \Der^\delta(\Lie{g})$.

\begin{theorem}\label{thm:qcf-implies-affine}
Let $\vect V$ be a polynomial pre\-quasi\-conformal vector field on a Carnot group $G$.
Then $\vect V \in \Lie{g}+ \Lie{g}_0 $.
\end{theorem}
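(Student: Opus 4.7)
The plan is to decompose $\vect V$ by dilation-degree, blow up to isolate the top component, and rule out positive degree. Write $\vect V = \sum_{j=-\leng}^{k} \vect V_j$ with each $\vect V_j$ homogeneous of degree $j$ and $\vect V_k \neq 0$; suppose for contradiction that $k > 0$.

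\emph{Blow-up.} Homogeneity of $\vect V_j$ gives the conjugation rule $\delta_s^{-1} \circ \Exp(t \vect V_j) \circ \delta_s = \Exp(t s^j \vect V_j)$, so
\[
\delta_s^{-1} \circ \Exp\bigl(s^{-k} u \vect V\bigr) \circ \delta_s = \Exp\Bigl(u \sum_j s^{j-k} \vect V_j\Bigr) \longrightarrow \Exp(u \vect V_k)
\]
locally uniformly in $(u,p)$ as $s \to \infty$, by continuous dependence of ODEs on their right-hand side (only the $j=k$ term survives, since $j-k\leq 0$). Each map on the left is $\lambda$-quasi\-conformal, as conjugation by the similarity $\delta_s$ preserves the quasi\-conformal constant; nonconstant locally uniform limits of $\lambda$-quasi\-conformal maps being themselves $\lambda$-quasi\-conformal, $\Exp(u \vect V_k)$ is a globally defined $\lambda$-quasi\-conformal flow, so $\vect V_k$ is itself pre\-quasi\-conformal.

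\emph{Rigidity at $e$.} Positive-degree homogeneity forces $\vect V_k(e) = 0$ (the only eigenvalues of $(\delta_s)_*$ on $T_e G = \Lie{g}$ are $s^j$ with $j \geq 1$, while homogeneity would demand $s^{-k}$), so $\phi_t := \Exp(t \vect V_k)$ fixes $e$. The identity $\delta_s \phi_t \delta_s^{-1} = \phi_{s^{-k} t}$, the chain rule for Pansu differentials, and the fact that the strata-preserving automorphism $\delta_s$ coincides with its own Pansu differential and commutes with every strata-preserving linear map, together give $D\phi_t|_e = D\phi_{s^{-k} t}|_e$ for every $s > 0$. Letting $s \to \infty$ forces $D\phi_t|_e = \id$ for all $t$.

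\emph{Propagation and conclusion.} The critical remaining step is to upgrade this to $D\phi_t|_p = \id$ for every $p \in G$. Once achieved, contactness forces $\deriv \phi_t|_p$ to restrict to the identity on $\Lie{g}_{-1}$ at every $p$ (since for a contact map the Pansu differential on $\Lie{g}_{-1}$ coincides with the horizontal derivative itself), so $(\phi_t)_* \tilde X = \tilde X$ for every $X \in \Lie{g}_{-1}$. Thus $\phi_t$ commutes with each right-translation flow $p \mapsto p \exp(rX)$, giving $\phi_t(p \exp(rX)) = \phi_t(p) \exp(rX)$; iterating and using that $\Lie{g}_{-1}$ bracket-generates $\Lie{g}$ (Chow), one obtains $\phi_t(pq) = \phi_t(p) q$ for all $p, q \in G$. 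Setting $p = e$ and using $\phi_t(e) = e$ yields $\phi_t = \id$, hence $\vect V_k = 0$, the desired contradiction.

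The main obstacle is the propagation: the natural tool is the scaling identity $D\phi_t|_p = D\phi_1|_{\delta_{t^{1/k}} p}$ (obtained by applying the chain rule to $\phi_t = \delta_{t^{-1/k}} \phi_1 \delta_{t^{1/k}}$), which converts the question to pinning down $D\phi_1$ on the entire dilation orbit of each $p$. I would combine this with the Heinonen--Koskela quasi\-symmetric bounds to control $D\phi_1$ at large scales, and with a normal-family argument for the uniformly $\lambda$-quasi\-conformal iterates $\phi_n = \phi_1^n = \delta_{n^{-1/k}} \phi_1 \delta_{n^{1/k}}$ (which are all $\lambda$-quasi\-conformal, being dilation-conjugate to $\phi_1$) to extract the required rigidity.
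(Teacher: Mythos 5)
Your blow-up step is sound and coincides with the paper's own reduction (an identical scaling identity and the same locally uniform ODE limit showing that the top homogeneous component $\vect V_k$ inherits a global $\lambda$-quasi\-conformal flow; this is Lemma \ref{lem:V-homogeneous} and the lemma preceding it). Your rigidity computation at $e$ is also correct: $\vect V_k(e)=0$, the conjugation identity gives $D\phi_t|_e = D\phi_{s^{-k}t}|_e$ for all $s>0$, and continuity of the differential of a smooth flow at $t=0$ yields $D\phi_t|_e=\id$. But the proof stops exactly where it would have to start: the propagation step, $D\phi_t|_e=\id \Rightarrow D\phi_t|_p=\id$ for all $p$, is not proved --- you yourself flag it as ``the critical remaining step'' and offer only a plan. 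Moreover the plan, as described, cannot close the gap. The scaling identity $D\phi_t|_p = D\phi_1|_{\delta_{t^{1/k}}p}$ transfers information only along dilation orbits, and your sole hard datum sits at the single point $e$. Worse, the normal-family/quasisymmetry input you invoke degenerates in precisely the situation at hand: if a nonzero homogeneous field of positive degree with a global quasi\-conformal flow existed, the paper shows (Lemma \ref{lem:trajectories-to-0}) that every trajectory would tend to $e$ in both time directions, so your iterates $\phi_n=\delta_{n^{-1/k}}\phi_1\delta_{n^{1/k}}=\phi_1^n$ would converge pointwise to the constant map $e$; the equicontinuity estimates of Lemma \ref{lem:qcf} (and the Heinonen--Koskela quasisymmetry behind them) require exactly the nondegeneracy normalization $f_n(p)\to p'\neq e$ that fails here. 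A flow fixing $e$ with $D\phi_t|_e=\id$ and all trajectories spiralling into $e$ is a perfectly coherent local picture, so no contradiction can be extracted from differentials at $e$ or from limits of iterates alone.

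The paper's actual contradiction is global and dynamical, and this mechanism is absent from your proposal. Using the quasisymmetric moduli of continuity in the \emph{non}-degenerate regime, it first shows that a trajectory with a limit point $p'\neq e$ must be constant (Lemma \ref{lem:nice-orbits}), hence that trajectories leave each annulus $\Alpha_{2^k,2^{k+1}}$ within a bounded time $T_k$ (Lemma \ref{lem:time-in-ring}); homogeneity gives $T_k=2^{-kj}T_0$ (Lemma \ref{lem:which}), so for $j>0$ an outward-moving trajectory would exhaust $\sum_{k\geq 0}T_k<\infty$ and blow up in finite time, contradicting globality of the flow --- whence all trajectories tend to $e$ (Lemma \ref{lem:trajectories-to-0}). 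Finally, the union of trajectories through $\Alpha_{1,2}$ is bounded (Lemma \ref{lem;trajectories-bounded}), while any point outside that bounded set has a trajectory tending to $e$ that must cross $\Alpha_{1,2}$ --- a contradiction. In short, the obstruction to positive degree is the interplay between completeness of the flow and summable annulus-crossing times, not a pointwise differential identity; your argument shares the paper's first reduction but has a genuine gap at its central step, and the tools you name do not fill it.
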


\begin{proof}[Outline of proof]
The proof is by contradiction, and is comprised of a series of lemmas.
First, we show that if $\vect V \notin \Lie{g}+ \Lie{g}_0$, then we may assume that $\vect V$ is homogeneous.
Next, we consider the flow $\Exp(\cdot \vect V)$.
We show first
that the trajectories of this flow must leave any annulus $\{ q \in G : r \leq \dist( q, e) \leq R \}$, where $0 < r <R < \infty$, after a finite time.
Next we show that the trajectories cannot tend to $\infty$, but must tend to $0$ as time grows.
Finally, we show that this implies that the trajectories cannot cover $G$, which is absurd.
\end{proof}

\begin{lemma}
 Suppose that $\vect V$ is a polynomial vector field on $G$, that $\vect V = \sum_{j=-\leng}^d {\vect V}^j$ where ${\vect V}^j$ is homogeneous of degree $j$, and that $\Exp(\cdot \vect V)$ is defined globally.
Then
\[
\delta_s \Exp( t s^d \vect V) \delta_{s^{-1}} = \Exp( t \sum_{j= -\leng}^d s^{d-j} {\vect V}^j)
\quad\forall s \in \R^+ \quad\forall t \in \R.
\]
\end{lemma}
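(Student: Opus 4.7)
The plan is to reduce this to two standard facts: first, that conjugation of a flow by a diffeomorphism amounts to the pushforward of the generating vector field; second, that the pushforward of a homogeneous vector field under a dilation is just the vector field rescaled by a power of $s$.

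First I would record the general identity that if $\Phi$ is a diffeomorphism of $G$ and $\vect W$ is a vector field whose flow $\Exp(t\vect W)$ is globally defined, then $\Phi \circ \Exp(t\vect W) \circ \Phi^{-1}$ is the global flow of the pushforward $\Phi_* \vect W$, that is,
\[
\Phi \circ \Exp(t\vect W) \circ \Phi^{-1} = \Exp(t \, \Phi_* \vect W)
\quad\forall t\in\R.
\]
This is immediate from differentiating both sides at $t=0$ and invoking uniqueness of global flows.

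Next I would compute $(\delta_s)_* \vect V^j$. Starting from the defining identity of homogeneity of degree $j$, namely $\vect V^j(u\circ \delta_s) = s^{-j}(\vect V^j u)\circ \delta_s$ for every smooth $u$, and unwinding what this says pointwise via the chain rule, one gets $((\delta_s)_* \vect V^j)_{\delta_s p} = s^{-j}\vect V^j_{\delta_s p}$ for all $p$, and therefore $(\delta_s)_* \vect V^j = s^{-j} \vect V^j$. By linearity of the pushforward, $(\delta_s)_* \vect V = \sum_{j=-\leng}^d s^{-j} \vect V^j$.

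Finally I would combine the two ingredients. Apply the conjugation identity with $\Phi=\delta_s$ and $\vect W = s^d \vect V$; note that $\Exp(t \cdot s^d \vect V)$ is simply the reparametrisation $\Exp(\tau \vect V)\rest{\tau=ts^d}$ of the given global flow, so it too is globally defined and the identity applies. We obtain
\[
\delta_s \Exp(t s^d \vect V)\delta_{s^{-1}}
= \Exp\Bigl(t s^d (\delta_s)_*\vect V\Bigr)
= \Exp\biggl(t \sum_{j=-\leng}^d s^{d-j} \vect V^j\biggr),
\]
which is the stated identity. There is no serious obstacle: the only thing one needs to be slightly careful about is that the flow on the right makes sense for all $t \in \R$, but this follows because the left-hand side is defined for all $t \in \R$ by hypothesis and uniqueness of flows of smooth (polynomial) vector fields transfers global existence to the right-hand side.
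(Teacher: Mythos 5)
Your proposal is correct and takes essentially the same route as the paper: the paper likewise differentiates the right-hand flow at $t=0$, uses the homogeneity identity $(\delta_s)_* ({\vect V}^j)_{\delta_{s^{-1}} q} = s^{-j}({\vect V}^j)_q$ to identify its generator with $s^d (\delta_s)_* \vect V$, and concludes by observing that both sides are flows in $t$ with the same generator. Your version merely makes explicit the conjugation--pushforward identity and the transfer of global existence, which the paper leaves implicit.
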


\begin{proof}
Observe that
 \[
\begin{aligned}
 \frac{d}{dt} \Exp( t \sum_{j= -\leng}^d s^{d-j} {\vect V}^j)  (q)\rest{t=0}
& =  \sum_{j= -\leng}^d s^{d-j} ({\vect V}^j)_q  \\
&=  s^d \sum_{j= -\leng}^d  (\delta_s)_* ({\vect V}^j)_{\delta_{s^{-1}} q} \\
&= s^d (\delta_s)_* (\vect V)_{\delta_{s^{-1}} q}
 \end{aligned}
 \]
Both sides of the expression to be proved are flows (as functions of $t$), and the result follows.
\end{proof}

\begin{lemma} \label{lem:V-homogeneous}
 Suppose that $\vect V$ is a polynomial vector field on a Carnot group $G$, and that $\vect V = \sum_{j=-\leng}^d {\vect V}^j$ where ${\vect V}^j$ is homogeneous of degree $j$.
If $\Exp(\cdot \vect V)$ is global and $\lambda$-quasi\-conformal , so is $\Exp(\cdot {\vect V}^d)$. 
\end{lemma}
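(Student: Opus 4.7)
The plan is to let $s \to 0^+$ in the scaling identity of the preceding lemma. Writing $\vect V_s := \sum_{j=-\leng}^d s^{d-j} \vect V^j$, that identity reads $\Exp(r \vect V_s) = \delta_s \circ \Exp(r s^d \vect V) \circ \delta_{s^{-1}}$. Since $\delta_s$ scales $\dist$ by the factor $s$, it is $1$-quasiconformal, so $\Exp(r \vect V_s)$ is globally defined and $\lambda$-quasiconformal for every $s > 0$ and $r \in \R$. The Heinonen--Koskela theorem then yields that the whole family $\lset \Exp(r \vect V_s) : s > 0,\, r \in \R \rset$ is $\eta$-quasisymmetric for a single modulus $\eta$ depending only on $\lambda$ and $G$.

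Next, as $s \to 0^+$ each coefficient $s^{d-j}$ with $j < d$ tends to $0$, so $\vect V_s \to \vect V^d$ in $C^\infty_{\mathrm{loc}}$. Standard continuous dependence of solutions of ODEs on parameters then gives $\Exp(r \vect V_s)(q) \to \Exp(r \vect V^d)(q)$ locally uniformly on any set of $(r, q)$ on which the limit flow is defined. To conclude that $\Exp(\cdot \vect V^d)$ is global, I would argue by contradiction: if the trajectory through some $q$ exited every compact set as $r \to T^-$ for finite $T$, then a sequence $\Exp(r_n \vect V_{s_n})(q)$ with $r_n \to T^-$ and $s_n \to 0^+$ would also escape to infinity. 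Renormalizing each $\Exp(r_n \vect V_{s_n})$ by the left translation $L_{\Exp(r_n \vect V_{s_n})(e)^{-1}}$ (to fix $e$) and a dilation (to send a reference point $q_0$ to the unit sphere), Lemma \ref{lem:qcf} applies to the normalized family; carefully tracking the normalizing factors, using the identity $\Exp(r \vect V_s)(e) = \delta_s(\Exp(r s^d \vect V)(e))$ together with Lemma \ref{lem:homo-norm}, should show that such an escape is incompatible with the resulting normal-family convergence.

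With global existence in hand, the convergence $\Exp(r \vect V_s) \to \Exp(r \vect V^d)$ is locally uniform on all of $\R \times G$. Passing to the limit in the $\eta$-quasisymmetry inequality preserves it, and, since the flows are smooth, the distortion of the Pansu differential of $\Exp(r \vect V^d)$ is the $s \to 0^+$ limit of the distortions of $\Exp(r \vect V_s)$, each of which is at most $\lambda$; hence $\Exp(r \vect V^d)$ is $\lambda$-quasiconformal. The main obstacle I expect is the no-blow-up step: the $\eta$-quasisymmetric estimates only control the maps up to composition with left translations and dilations, so the delicate point is to verify that the normalizing factors --- governed by the orbit of $e$ under the rescaled flow --- remain in a regime keeping the normalized family nondegenerate as $s \to 0^+$.
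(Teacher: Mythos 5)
Your proposal follows essentially the same route as the paper's proof: conjugate by dilations via the preceding scaling lemma to obtain the global $\lambda$-quasi\-conformal flows $\Exp\lpar t\sum_{j=-\leng}^{d} s^{d-j}\vect V^j\rpar$, let the scaling parameter degenerate so that the vector fields converge to $\vect V^d$, and pass quasi\-conformality to the locally uniform limit of the flows. Your limiting regime $s\to 0^+$ is in fact the correct one for the coefficients $s^{d-j}$ (the paper writes ``as $s\to\infty$'', which is a slip or an implicit substitution $s\mapsto 1/s$), and your extra normal-family/no-blow-up apparatus is added care at a point the paper settles with a bare appeal to the theory of ordinary differential equations.
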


\begin{proof}
Evidently,  $\delta_s \Exp( t s^d \vect V) \delta_{s^{-1}}$ and hence also $\Exp(t \sum_{j= -\leng}^d s^{d-j} {\vect V}^j)$ are $\lambda$-quasi\-conformal for all $s \in \R^+$ and $t \in \R$.
From the theory of ordinary differential equations, $\Exp(t \sum_{j= -\leng}^d s^{d-j} {\vect V}^j)(q)$ converges locally uniformly to $\Exp(t {\vect V}^d)(q)$ as $s \to \infty$.
Hence $\Exp(t {\vect V}^d)(q)$ is $\lambda$-quasi\-conformal.
\end{proof}

\begin{remark}
 We use this argument because it is not clear that the set of pre\-quasi\-conformal vector fields forms a Lie algebra.
 If we were dealing with a different type of map for which the corresponding family of vector fields formed a Lie algebra, then it would follow from the fact that $\Exp(t \sum_{j= -\leng}^d s^{d-j} {\vect V}^j)(q)$ has the desired property for infinitely many values of $s$ that $\Exp(t {\vect V}^j)(q)$ does too for all $j$.
\end{remark}

Recall that $\Alpha_{r,R}$ denotes the annulus $\{ q \in G : r \leq \dist( q, e) \leq R \}$.

\begin{lemma}\label{lem:nice-orbits}
 Suppose that $\vect V$ is a polynomial vector field on a Carnot group $G$, that $\vect V$ is homogeneous of strictly positive degree $j$, and $\Exp(\cdot \vect V)$ is a $\lambda$-quasi\-conformal global flow.
If there exist $p, p' \in G \setminus \{e\}$ and a real sequence $(t_k: k \in \N)$ such that $t_k \to \infty$  and $\Exp(t_k \vect V)(p) \to p'$ as $k \to \infty$, then $\Exp(t \vect V)(p) = p$ for all $t \in  \R$.
\end{lemma}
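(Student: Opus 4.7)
The plan is to combine the normal family property of quasiconformal maps (Lemma~\ref{lem:qcf}) with the self-similarity of the flow that comes from the homogeneity of $\vect V$. First, the identity $(\delta_s)_*\vect V_e = s^{-j}\vect V_e$ with $j>0$ forces $\vect V_e=0$, so $\phi_t(e)=e$ for all $t$, and each $\phi_{t_k}$ is a global $\lambda$-quasiconformal self-map of $G$ fixing $e$ with $\phi_{t_k}(p)\to p'\neq e$. Lemma~\ref{lem:qcf} furnishes uniform containment of annuli and a common modulus of continuity for $\{\phi_{t_k}\}$ and (via the last part of the lemma) for their inverses. An Arzel\`a--Ascoli argument with a diagonal extraction over a countable exhaustion of $G\setminus\{e\}$ by annuli $\Alpha_{r,R}$ delivers a subsequence, still denoted $(t_k)$, along which $\phi_{t_k}\to\Phi$ and $\phi_{-t_k}\to\Phi^{-1}$ locally uniformly on $G\setminus\{e\}$, the limit $\Phi$ being $\lambda$-quasiconformal and satisfying $\Phi(e)=e$, $\Phi(p)=p'$ and $\Phi^{-1}(p')=p$.

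Passing to the limit in $\phi_s\circ\phi_{t_k}=\phi_{t_k}\circ\phi_s$ yields the commutation $\Phi\circ\phi_s=\phi_s\circ\Phi$ for every $s\in\R$. The homogeneity of $\vect V$ of degree $j$ gives the dilation conjugation $\delta_r\phi_t\delta_r^{-1}=\phi_{tr^{-j}}$, or equivalently $\phi_{nt_k}=\delta_{n^{1/j}}^{-1}\phi_{t_k}\delta_{n^{1/j}}$, so matching this with $\phi_{nt_k}=\phi_{t_k}^n$ in the limit as $k\to\infty$ produces the self-similarity relation
\[
\Phi^n=\delta_{n^{1/j}}^{-1}\Phi\,\delta_{n^{1/j}},\qquad n\in\Z,
\]
and in particular $\Phi(\delta_{n^{1/j}}p)=\delta_{n^{1/j}}\Phi^n(p)$.

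To conclude, I argue by contradiction, assuming $\vect V_p\neq 0$, so the orbit of $p$ is a nontrivial smooth curve. The self-similarity above links the iterates $\Phi^n(p)$ to the behavior of $\Phi$ along the dilation ray $\delta_{n^{1/j}}p\to\infty$, while the Heinonen--Koskela quasisymmetry of $\Phi$ gives two-sided bounds on $\dist(\Phi(\delta_{n^{1/j}}p),e)$ in terms of $n^{1/j}$; feeding the orbit-commutation $\Phi\circ\phi_s=\phi_s\circ\Phi$ into these bounds forces the iterates $\Phi^n(p)$ to escape every fixed annulus as $|n|\to\infty$, contradicting the existence of both limits $\Phi(p)=p'\neq e$ and $\Phi^{-1}(p')=p$. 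The main obstacle is precisely this final contradiction: the two-sided quasisymmetric bounds alone are compatible with $\Phi^n(p)$ remaining in a fixed annulus for generic qc maps, so the argument must genuinely exploit the interplay between the commutation with the continuous flow $\phi_s$, the discrete self-similarity under $\delta_{n^{1/j}}$, and the strict positivity of the homogeneity degree $j$. It is here that the global nature of the quasiconformal flow on the rigid Carnot group must be used in an essential way.
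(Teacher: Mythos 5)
Your opening moves coincide with the paper's: homogeneity of strictly positive degree forces $\vect V_e=0$, so the maps $f_k=\Exp(t_k\vect V)$ fix $e$ and Lemma \ref{lem:qcf} applies, giving uniform annulus containment and common moduli of continuity $\omega$, $\omega'$ for the $f_k$ and their inverses; and the scaling relation $\Exp(ts^j\vect V)=\delta_{s^{-1}}\circ\Exp(t\vect V)\circ\delta_s$ is the correct use of homogeneity. But there is a genuine gap, and you name it yourself: the final contradiction is never derived. Extracting a limit map $\Phi$ by Arzel\`a--Ascoli and recording the discrete self-similarity $\Phi^n=\delta_{n^{1/j}}^{-1}\Phi\,\delta_{n^{1/j}}$ discards exactly the information that closes the argument, namely that the good times form \emph{intervals} whose lengths grow with $t_k$. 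Moreover, the contradiction you aim for is aimed at the wrong target: if the lemma's conclusion holds, then $p$ is fixed by every $\phi_t$ and hence by $\Phi$, so $\Phi^n(p)=p$ never escapes any annulus --- escape of the iterates cannot be what the hypotheses force. There are also secondary soundness issues: $\phi_{nt_k}\to\Phi^n$ requires that iterated images of compact subsets of $G\setminus\{e\}$ stay uniformly inside compact subsets of $G\setminus\{e\}$, which a single application of Lemma \ref{lem:qcf} does not give, and $n^{1/j}$ is meaningless for negative $n$ when $j$ is even.

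The paper closes the argument with no limit map at all, by applying your scaling relation with $s$ \emph{close to} $1$ rather than $s=n^{1/j}$ large. By the triangle inequality, $\dist(\Exp(s^jt_k\vect V)(p),p')$ is at most the sum of $\dist(\delta_{s^{-1}}\Exp(t_k\vect V)(\delta_sp),\Exp(t_k\vect V)(\delta_sp))$, which is $O(|s-1|^{1/\leng})$ uniformly in $k$ by Lemma \ref{lem:homo-norm} since the images lie in a fixed annulus; of $\dist(\Exp(t_k\vect V)(\delta_sp),\Exp(t_k\vect V)(p))\leq\omega(\dist(\delta_sp,p))$, small uniformly in $k$ for $s$ near $1$; and of $\dist(\Exp(t_k\vect V)(p),p')$, small for large $k$. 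Hence for every $\epsilon>0$ one gets $\dist(\Exp(t\vect V)(p),p')<\epsilon$ for all $t$ in intervals $t_k(1-\zeta,1+\zeta)$, whose lengths $2\zeta t_k$ tend to infinity; symmetrically, using the last assertion of Lemma \ref{lem:qcf} that $\Exp(-t_k\vect V)(p')\to p$, one gets $\dist(\Exp(-t\vect V)(p'),p)<\epsilon$ on such intervals. Composing the backward flow at time $t_1$ with the forward flow at time $t_2$ and using the equicontinuity of the inverses yields that $\dist(\Exp((t_2-t_1)\vect V)(p),p)$ is small for all $t_1,t_2$ in an arbitrarily long interval; since the differences $t_2-t_1$ then exhaust $\R$, it follows that $\Exp(t\vect V)(p)=p$ for all $t\in\R$. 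This is precisely the step your proposal lacks: the strict positivity of $j$ enters not through large-scale self-similarity of a limit map, but through the fact that perturbing $s$ near $1$ sweeps the time $s^jt_k$ through a long interval around $t_k$, after which the group law of the flow converts ``close to $p'$ on long time intervals'' into ``$p$ is a fixed point''.
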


\begin{proof}
We apply Lemma \ref{lem:qcf}, taking $f_k$ to be $\Exp(t_k\vect V)$.
We may suppose that $\dist( \Exp(t_k \vect V)(p), p') < \frac{1}{2} \dist(e, p')$ for all $k \in \N$.
By the lemma, the maps $\Exp(t_k \vect V)$ are continuous on the annulus $\Alpha_{r, R}$, where $r = \frac{1}{2} \dist(p,e) $ and $R = 2 \dist(p,e) $, with a common modulus of continuity $\omega$, and and they all map $\Alpha_{r, R}$ into an annulus $\Alpha_{r', R'}$.
Moreover, the maps $\Exp(-t_k \vect V)$ are continuous on the annulus $\Alpha_{r', R'}$, with a common modulus of continuity $\omega'$.
Furthermore, $\Exp(- t_k \vect V)(p') \to p$ as $k \to \infty$.

Take $s$ very close to $1$, and observe that
\[
\begin{aligned}
\dist( \Exp( s^j t_k \vect V) (p), p')
 &= \dist( \delta_{s^{-1}} \Exp( t_k \vect V) (\delta_s p), p') \\
 &\leq  \dist( \delta_{s^{-1}} \Exp( t_k \vect V) (\delta_s p), \Exp( t_k \vect V) (\delta_s p) ) \\
 &\qquad +  \dist( \Exp( t_k \vect V) (\delta_s p), \Exp( t_k \vect V)  (p)) \\
 &\qquad +  \dist( \Exp( t_k \vect V)  (p), p') \\
 &\leq C \labs s-1 \rabs^{1/\leng} \dist(  \Exp( t_k \vect V) (\delta_s p), e) \\
 &\qquad + \omega( \dist ( \delta_s p, p) ) \\
 &\qquad + \dist( \Exp( t_k \vect V)  (p), p') ,
\end{aligned}
\]
by Lemma \ref{lem:homo-norm}.
We may make the first and second terms arbitrarily small, uniformly in $k$, by taking $s$ close to $1$; the third term may be made arbitrarily small by taking $k$ large.
We deduce that for all positive $\epsilon$,  we may take $I = t_k (1-\zeta, 1+ \zeta)$ for small $\zeta$ and large $k$, and then $\dist( \Exp( t \vect V) (p), p') < \epsilon$ for all $t \in I$; the intervals $I$ may be made arbitrarily long in $\R$.
Exchanging the role of $p$ and $p'$, we may make $\dist( \Exp( - t \vect V) (p'), p) < \epsilon$ for all $t \in I$ in the same way.
Hence we may make $\dist( \Exp( - t_1 \vect V) \circ \Exp(  t_2 \vect V) (p), p) < 2\epsilon$ for all $t_1$ and $t_2$ in an arbitrary long interval $I$, and so $\dist( \Exp( (t_2- t_1) \vect V) (p), p) < \epsilon$ for all such $t_1$ and $t_2$.
It follows that $\dist( \Exp( (t \vect V) (p), p) < 2\epsilon$ for all $t \in \R$, and since $\epsilon$ is arbitrary, $\Exp( (t \vect V) (p) = p$ for all $t$.
\end{proof}

\begin{lemma}
 Suppose that $\vect V$ is a nonzero polynomial vector field on a Carnot group $G$, that $\vect V$ is homogeneous of strictly positive degree $j$, and $\Exp(\cdot \vect V)$ is a $\lambda$-quasi\-conformal global flow.
Then the only point $p$ in $G$  at which $\vect V_p = 0$ is $e$.
\end{lemma}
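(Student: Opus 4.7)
The plan is to argue by contradiction: assume $\vect V_p = 0$ for some $p \in G \setminus \{e\}$ and deduce that $\vect V \equiv 0$, contradicting nonvanishing. Writing $\psi_t := \Exp(t \vect V)$, the assumption forces $\psi_t(p) = p$ for every $t \in \R$. I will also need $\psi_t(e) = e$, which is automatic: the homogeneity relation evaluated at the fixed point $e$ gives $(\delta_s)_* \vect V_e = s^{-j} \vect V_e$, and since every eigenvalue of $(\delta_s)_*$ on $\Lie{g} = T_e G$ is a positive power of $s$ while $-j < 0$, the only solution is $\vect V_e = 0$. Thus each $\psi_t$ is a globally defined $\lambda$-quasi\-conformal self-map of $G$ that fixes both $e$ and $p$.

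Next I will apply Lemma \ref{lem:qcf} to the family $f_k := \psi_{t_k}$ for an arbitrary sequence $t_k \to \infty$, using the nontrivial fixed point $p$ as the anchoring reference: the hypotheses $f_k(e) = e$ and $\lim_k f_k(p) = p \neq e$ hold trivially. For each annulus $\Alpha_{r,R}$ this produces an annulus $\Alpha_{r',R'}$ with $\psi_{t_k}(\Alpha_{r,R}) \subseteq \Alpha_{r',R'}$ uniformly in $k$. So for any fixed $q \in G \setminus \{e\}$, choosing $r < \dist(q,e) < R$ puts the sequence $(\psi_{t_k}(q))_k$ in the compact annulus $\Alpha_{r',R'}$; passing to a subsequence, $\psi_{t_k}(q) \to q'$ for some $q' \in \Alpha_{r',R'}$, and in particular $q' \neq e$.

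Lemma \ref{lem:nice-orbits}, applied with this sequence and limit, then gives $\psi_t(q) = q$ for every $t \in \R$, hence $\vect V_q = 0$. Since $q$ was arbitrary in $G \setminus \{e\}$ and $\vect V_e = 0$, the polynomial vector field $\vect V$ vanishes identically, contradicting the hypothesis that $\vect V$ is nonzero.

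The substantive observation, which I expect to be the only nontrivial point, is recognizing that the hypothetical fixed point $p \neq e$ is precisely the data needed to invoke Lemma \ref{lem:qcf}: it is this nontrivial fixed point that supplies a uniform two-sided bound on $\dist(\psi_{t_k}(q), e)$ and thereby allows one to extract a subsequential limit $q'$ bounded away from $e$, which is exactly what Lemma \ref{lem:nice-orbits} requires. Beyond this observation the proof is essentially a direct composition of the two preceding lemmas.
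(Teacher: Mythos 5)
Your proposal is correct, and every application of the auxiliary results checks out: $\psi_{t_k}(e)=e$ does follow from homogeneity of positive degree (the relation $(\delta_s)_*\vect V_e = s^{-j}\vect V_e$ at the fixed point $e$ forces $\vect V_e=0$, since $\delta_s$ has only the eigenvalues $s,\dots,s^{\leng}$ on $\Lie g$), the pair $(e,p)$ of fixed points legitimately anchors Lemma \ref{lem:qcf}, and the subsequential limit $q'\neq e$ is exactly the input Lemma \ref{lem:nice-orbits} needs. However, your route differs in mechanics from the paper's. The paper does not invoke Lemma \ref{lem:qcf} at all in this proof; instead it exploits homogeneity more heavily, observing that $\vect V$ vanishes along the whole dilation ray $\{\delta_s p : s\in\R^+\}$, choosing a second fixed point $p_\epsilon$ on that ray with $\dist(p,p_\epsilon)=\epsilon$, and applying the raw quasisymmetry inequality with the reference pair $(p,p_\epsilon)$ to trap trajectories issuing from $B(p,\epsilon)$ inside $B(p,\epsilon\,\eta(1))$; Lemma \ref{lem:nice-orbits} then forces every point of the ball to be fixed, giving $\vect V=0$ on an open set, and polynomiality propagates the vanishing to all of $G$. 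Your argument instead anchors at $(e,p)$ and confines $\psi_{t_k}(q)$ for an \emph{arbitrary} $q\neq e$ to a compact annulus, so you conclude $\vect V_q=0$ at every point directly, with no need for the ray of zeros and no identity-theorem step for polynomial vector fields. What each buys: yours is globally uniform in one stroke and makes explicit the hypothesis $f_k(e)=e$ that the paper leaves implicit, at the cost of routing through the heavier normal-family machinery of Lemma \ref{lem:qcf}; the paper's is more local and self-contained, needing only the quasisymmetry estimate itself, which is why it must finish with the polynomial continuation argument that your version renders unnecessary.
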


\begin{proof}
Suppose that $p \in G \setminus\{ e\}$ and $\vect V_p = 0$.
Take a closed ball $B(p, \epsilon)$ around $p$.
Since $\vect V$ is homogeneous,  $\vect V_{q} =0$ for all $q \in  \{ \delta_s p : s \in \R^+ \}$.
There exists a point $p_\epsilon$ in $\{ \delta_s p : s \in \R^+ \}$ such that $\dist(p, p_\epsilon) = \epsilon$.
Now  $\Exp(t \vect V)$ is $\eta$-quasisymmetric for some homeomorphism $\eta: [0,\infty) \to [0,\infty)$, and so for any $q \in B(p, \epsilon)$,
\[
\frac{ \dist( \Exp(t\vect V)(q), \Exp(t\vect V)(p) ) }{ \dist( \Exp(t\vect V)(p_\epsilon), \Exp(t\vect V)(p) ) } \leq \eta \lpar \frac{ \dist( q, p ) }{ \dist( p_\epsilon, p ) } \rpar,
\]
that is, $\dist( \Exp(t\vect V)(q), p) \leq \epsilon \fn\eta ( 1)$.
If $\Exp(t\vect V)(q) \neq q$ for some $t \in \R$ and $q \in B$, then $\{ \Exp(t \vect V) q : t \in \R \}$ has a limit point, necessarily in $B(p, \epsilon)$, which contradicts the previous lemma.
Otherwise, $\vect V = 0$ in $B(p, \epsilon)$ and so $\vect V = 0$ everywhere.
\end{proof}

\begin{lemma}\label{lem:time-in-ring}
Suppose that $\vect V$ is a nonzero polynomial vector field on a Carnot group $G$, that $\vect V$ is homogeneous of strictly positive degree $j$, and $\Exp(\cdot \vect V)$ is a $\lambda$-quasi\-conformal global flow.
Suppose also that $0 < r < R < \infty$ .
For each $p \in  \Alpha_{r,R}$, let
\[
t_p = \sup\{ t \in [0, \infty) : \Exp(t \vect V)(p) \in \Alpha_{r,R} \} .
\]
For each $p \in  \Alpha_{r,R}$,  the supremum is attained and is finite, and $\sup\{ t_p : p \in \Alpha_{r,R} \}$ is finite.
\end{lemma}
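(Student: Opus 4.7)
The plan is to establish the three assertions in order: first that $t_p < \infty$ for every $p \in \Alpha_{r,R}$, then that the supremum defining $t_p$ is attained, and finally that $t_p$ is bounded uniformly in $p$. The key ingredient beyond standard compactness and continuity is the previous Lemma \ref{lem:nice-orbits}. Throughout I will use that $\Alpha_{r,R}$ is compact (since via the Nagel--Stein--Wainger equivalence a $\dist$-closed bounded set corresponds to a closed bounded set in the Euclidean topology on $\Lie{g} \cong \R^n$), that the flow $(t,q) \mapsto \Exp(t\vect V)(q)$ is continuous jointly in $t$ and $q$ and globally defined by hypothesis, and that any point of $\Alpha_{r,R}$ differs from $e$ since $r>0$.

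For finiteness of $t_p$, I argue by contradiction. If $t_p = \infty$, then the entire forward trajectory $\{\Exp(t\vect V)(p) : t\ge 0\}$ lies in the compact set $\Alpha_{r,R}$, so I can extract times $t_k \to \infty$ with $\Exp(t_k \vect V)(p) \to p'$ for some $p' \in \Alpha_{r,R}$. Both $p$ and $p'$ differ from $e$, so Lemma \ref{lem:nice-orbits} applies and gives $\Exp(t\vect V)(p) = p$ for all $t \in \R$. Hence $\vect V_p = 0$, which by the preceding lemma forces $p=e$, contradicting $p \in \Alpha_{r,R}$.

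Once $t_p < \infty$, attainment is immediate: take $t_n \nearrow t_p$ with $\Exp(t_n \vect V)(p) \in \Alpha_{r,R}$ and use closedness of $\Alpha_{r,R}$ together with continuity of $t \mapsto \Exp(t\vect V)(p)$. For the uniform bound, I suppose $\sup_{p \in \Alpha_{r,R}} t_p = \infty$ and choose $p_k \in \Alpha_{r,R}$ with $t_{p_k} \to \infty$; extracting a convergent subsequence gives $p_k \to p_\infty \in \Alpha_{r,R}$. For any fixed $T>0$ and $k$ sufficiently large, $t_{p_k} \geq T$, so $\Exp(T\vect V)(p_k) \in \Alpha_{r,R}$, and joint continuity of the flow yields $\Exp(T\vect V)(p_\infty) \in \Alpha_{r,R}$. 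Since $T$ was arbitrary, $t_{p_\infty} = \infty$, which contradicts the finiteness just proved.

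The main obstacle is the first step: without Lemma \ref{lem:nice-orbits}, ruling out a trajectory that accumulates inside the annulus would be delicate, as the flow need not preserve any obvious invariant. With that lemma in hand, the no-accumulation conclusion combined with the fact that the only zero of $\vect V$ is $e$ closes the argument, and the remaining two claims are routine compactness-and-continuity manoeuvres.
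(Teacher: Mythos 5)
Your treatment of the first two assertions matches the paper's: pointwise finiteness via compactness of $\Alpha_{r,R}$, a sequence $t_k \to \infty$ with $\Exp(t_k\vect V)(p) \to p' \in \Alpha_{r,R}$, and Lemma \ref{lem:nice-orbits}; attainment via closedness and continuity. In fact you are more careful than the paper here: the paper simply says the accumulation ``contradicts Lemma \ref{lem:nice-orbits}'', whereas strictly the lemma yields $\Exp(t\vect V)(p)=p$ for all $t$, hence $\vect V_p=0$, and the contradiction requires the preceding lemma that $e$ is the only zero of $\vect V$ — exactly the chain you spell out. One phrasing slip: $t_p=\infty$ does \emph{not} mean the entire forward trajectory lies in $\Alpha_{r,R}$, only that the trajectory visits it at arbitrarily large times; this is harmless in your first step, since the extraction of $t_k$ only needs the visiting times, but it foreshadows the real problem below.

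The uniform bound contains a genuine gap. You infer from $t_{p_k}\ge T$ that $\Exp(T\vect V)(p_k)\in\Alpha_{r,R}$, which tacitly assumes the visiting-time set $\lset t\ge 0 : \Exp(t\vect V)(p_k)\in\Alpha_{r,R}\rset$ is the interval $[0,t_{p_k}]$. Nothing established at this stage rules out a trajectory leaving the annulus and re-entering it (the later lemmas only control behaviour \emph{after} the last exit time), so $t_{p_k}\ge T$ only guarantees membership in $\Alpha_{r,R}$ at \emph{some} time $\ge T$, for instance $t=t_{p_k}$, not at $t=T$ itself; and joint continuity of the flow, which holds uniformly only on compact time intervals, cannot bridge the times $t_{p_k}\to\infty$. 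A symptom that something is wrong is that your uniform-bound step never uses quasiconformality. The paper's argument at this point is different and does use it: with $t_k=t_{p_k}$ attained, so that $\Exp(t_k\vect V)(p_k)\in\Alpha_{r,R}$ and $p_k\to p\in\Alpha_{r,R}$, the maps $\Exp(t_k\vect V)$ are equicontinuous on $\Alpha_{r,R}$ with a common modulus of continuity $\omega$ — this comes from quasisymmetry via Lemma \ref{lem:qcf} — whence $\dist\bigl(\Exp(t_k\vect V)(p),\Alpha_{r,R}\bigr)\le \omega\bigl(\dist(p,p_k)\bigr)\to 0$, so along a subsequence $\Exp(t_k\vect V)(p)$ converges to some $p'\in\Alpha_{r,R}$ with $t_k\to\infty$, contradicting Lemma \ref{lem:nice-orbits} as in your first step. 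Replacing your fixed-$T$ manoeuvre by this equicontinuity argument repairs the proof; without some such uniform-in-$t$ control the compactness-and-continuity reasoning alone does not suffice.
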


\begin{proof}
First, fix $p \in \Alpha_{r,R}$.
Clearly  $t_p$ is finite, for otherwise there exist $p' \in \Alpha_{r,R}$ and a sequence $(t_k : k \in \N)$ in $\R^+$ such that $t_k \to \infty$ and $\Exp(t_k \vect V)(p) \to p' \in \Alpha_{r,R}$, which contradicts Lemma \ref{lem:nice-orbits}.
Further, if $(t_k : k \in \N)$ is a sequence in $\R^+$ such that  $\Exp(t_k \vect V)(p) \in \Alpha_{r,R}$ and $t_k \to t_p$, then by extracting a subsequence if necessary, we may suppose that $(\Exp(t_k \vect V)(p) : k \in \N)$ converges, whence $\Exp(t_p \vect V)(p) \in \Alpha_{r,R}$.

Now suppose that there exist sequences $(p_k : k \in \N)$  in $\Alpha_{r,R}$ and $(t_k : k \in \N )$ in $\R^+$ such that $t_k \to \infty$ as $k \to \infty$ and $ \Exp(t_k \vect V)(p_k) \in\Alpha_{r,R}$.
By passing to a subsequence if necessary, we may suppose that $p_k \to p$ as $k \to \infty$, where $p \in \Alpha_{r,R}$.

As argued in the proof of Lemma \ref{lem:qcf}, the maps $ \Exp(t_k \vect V)$ are equicontinuous on $\Alpha_{r,R}$ with a common modulus of continuity $\omega$.
Hence
\[
\begin{aligned}
\dist( \Exp(t_k \vect V)(p), \Alpha_{r,R} )
&\leq \dist( \Exp(t_k \vect V)(p), \Exp(t_k \vect V)(p_k) ) + \dist( \Exp(t_k \vect V)(p_k), \Alpha_{r,R} ) \\
& \leq \omega( \dist( p, p_k) ) \to 0
\end{aligned}
\]
as $k \to \infty$, and so the sequence $( \Exp(t_k \vect V)(p) : k \in \N)$ has a subsequence that converges to $p' \in \Alpha_{r,R}$, which contradicts Lemma \ref{lem:nice-orbits}.
\end{proof}

\begin{remark}\label{rem:bla}
The quantities $t_p$ and $\sup\{t_p : p \in \Alpha_{r,R}\}$ of the previous lemma depend on $r$ and $R$.
Further, $\dist( \Exp(t_p \vect V)(p), e)$ is equal to either $r$ or $R$.
\end{remark}

We write $T_k$ for $\sup\{t_p : p \in \Alpha_{2^k, 2^{k+1}} \}$, where $k \in \Z$.

\begin{lemma}\label{lem:which}
 Suppose that $\vect V$ is a nonzero polynomial vector field on a Carnot group $G$, that $\vect V$ is homogeneous of degree $j$, and $\Exp(\cdot \vect V)$ is a $\lambda$-quasi\-conformal global flow.
 Then $T_k = 2^{-kj} T_0$.
\end{lemma}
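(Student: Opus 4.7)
The plan is to derive the scaling identity $T_k = 2^{-kj} T_0$ directly from the conjugation formula established in the first lemma of this section, applied to the special case where $\vect V$ has a single homogeneous component. Since $\vect V = \vect V^j$ is homogeneous of degree $d = j$, that lemma collapses to
\[
\delta_s \Exp(t s^j \vect V) \delta_{s^{-1}} = \Exp(t \vect V)
\quad\forall s \in \R^+ \quad\forall t \in \R,
\]
or equivalently $\Exp(t\vect V)(p) = \delta_s\bigl(\Exp(t s^j \vect V)(\delta_{s^{-1}} p)\bigr)$.

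Next, I would specialize to $s = 2^k$ and use the fact that $\delta_{2^{-k}}$ is a bijection between $\Alpha_{2^k, 2^{k+1}}$ and $\Alpha_{1, 2} = \Alpha_{2^0, 2^1}$, together with the homogeneity of $\dist$ under $\delta_s$. For any $p \in \Alpha_{2^k, 2^{k+1}}$ and any $t \geq 0$,
\[
\Exp(t \vect V)(p) \in \Alpha_{2^k, 2^{k+1}}
\iff \Exp(t \cdot 2^{kj} \vect V)(\delta_{2^{-k}} p) \in \Alpha_{1, 2}.
\]
Reparametrizing the time variable by $u = t \cdot 2^{kj}$, this identifies
\[
\{\, t \geq 0 : \Exp(t\vect V)(p) \in \Alpha_{2^k, 2^{k+1}} \,\} = 2^{-kj} \{\, u \geq 0 : \Exp(u \vect V)(\delta_{2^{-k}} p) \in \Alpha_{1, 2} \,\}.
\]
Taking suprema (both well-defined and finite by Lemma \ref{lem:time-in-ring}) gives $t_p = 2^{-kj} t_{\delta_{2^{-k}} p}$, where the right-hand $t_{\delta_{2^{-k}} p}$ refers to the annulus $\Alpha_{1, 2}$.

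Finally, as $p$ ranges over $\Alpha_{2^k, 2^{k+1}}$, the image $\delta_{2^{-k}} p$ ranges over all of $\Alpha_{1, 2}$, so taking the supremum over $p$ yields $T_k = 2^{-kj} T_0$, as required. The argument is essentially a one-line application of the conjugation identity, so no step presents a real obstacle; the only thing to double-check is that $t_p$ is interpreted consistently (with respect to the appropriate annulus) on each side of the equation.
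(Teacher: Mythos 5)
Your proof is correct and is essentially the paper's own argument: both conjugate the flow by the dilation $\delta_{2^k}$, which carries trajectories in $\Alpha_{1,2}$ to trajectories in $\Alpha_{2^k,2^{k+1}}$ with time rescaled by the factor $2^{kj}$, and then take suprema of the exit times. In fact your displayed identity $\Exp(t\vect V)(p)=\delta_{2^k}\bigl(\Exp(2^{kj}t\vect V)(\delta_{2^{-k}}p)\bigr)$ is the correct orientation of the time rescaling --- the paper's proof writes the factor $2^{kj}$ on the opposite side of the equation, an apparent typo --- and your careful bookkeeping of which annulus each $t_p$ refers to makes the step from $t_p = 2^{-kj}t_{\delta_{2^{-k}}p}$ to $T_k = 2^{-kj}T_0$ cleaner than the paper's one-line conclusion.
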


\begin{proof}
The dilation $\delta_{2^k}$ maps $\Alpha_{1,2}$ onto $\Alpha_{2^k, 2^{k+1}}$, and trajectories of $\Exp(\cdot \vect V)$ in the first annulus onto trajectories in the other.
More precisely,
\[
\Exp(2^{kj} t\vect V) (\delta_{2^k} p) = \delta_{2^k} \Exp(t\vect V)(p)
\quad\forall p \in \Alpha_{1,2}.
\]
Each trajectory in $\Alpha_{2^k, 2^{k+1}}$ is traversed $2^{kj}$ times faster than the corresponding trajectory in $\Alpha_{1,2}$, and the lemma follows.
\end{proof}

\begin{lemma}\label{lem:trajectories-to-0}
 Suppose that $\vect V$ is a nonzero polynomial vector field on a Carnot group $G$, that $\vect V$ is homogeneous of strictly positive degree $j$, and $\Exp(\cdot \vect V)$ is a $\lambda$-quasi\-conformal global flow.
 Then $\lim_{t \to \pm \infty} \Exp(t\vect V)(p_0) = e$ for all $p_0 \in G \setminus\{e\}$.
\end{lemma}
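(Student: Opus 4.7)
The plan is to prove forward convergence by contradiction; backward convergence will then follow by applying the same result to $-\vect V$, which satisfies the identical hypotheses (its flow $\Exp(-t\vect V)$ is the inverse of $\Exp(t\vect V)$, hence also a globally defined $\lambda$-quasi\-conformal flow, and it is polynomial, nonzero, and homogeneous of the same positive degree $j$). Fix $p_0 \in G\setminus\{e\}$, suppose the forward orbit does not converge to $e$, and set $\rho(t) = \dist(\Exp(t\vect V)(p_0), e)$, a continuous strictly positive function of $t$.

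First I would exclude any non-trivial subsequential limit: if $t_n \to \infty$ and $\Exp(t_n\vect V)(p_0) \to p'$ with $p' \neq e$, then Lemma \ref{lem:nice-orbits} forces $\Exp(t\vect V)(p_0) = p_0$ for every $t$, whence $\vect V_{p_0} = 0$, and the preceding lemma on zeros of $\vect V$ gives $p_0 = e$, a contradiction. This already reduces the problem to the single remaining failure mode $\rho(t) \to \infty$. Indeed, if $\rho$ tends neither to $0$ nor to $\infty$, then either $\rho$ is bounded with $\liminf \rho > 0$ (and a subsequence of $\Exp(t_n\vect V)(p_0)$ converges in a compact annulus by compactness of closed balls in $G$), or $\liminf \rho = 0 < \limsup \rho = \infty$ (and continuity of $\rho$ together with the intermediate value theorem produces $t_n \to \infty$ with $\rho(t_n) = 1$, after which a subsequence converges on the compact unit sphere). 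Either way we obtain a limit $p' \neq e$, contradicting what was just proved.

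The main step is to rule out $\rho(t) \to \infty$. For each integer $k > \log_2 \rho(0)$, set $\tau_k = \inf\lset t \geq 0 : \rho(t) = 2^k \rset$, which is finite by continuity and $\rho \to \infty$. Applying Lemma \ref{lem:time-in-ring} to $\Alpha_{2^k,2^{k+1}}$ at the point $\Exp(\tau_k\vect V)(p_0)$ and using Lemma \ref{lem:which} to evaluate the bound, the orbit leaves $\Alpha_{2^k,2^{k+1}}$ for good by time $\tau_k + T_k$, where $T_k = 2^{-kj}T_0$. For $t > \tau_k + T_k$ the orbit therefore lies in the disjoint open set $\lset \rho < 2^k \rset \cup \lset \rho > 2^{k+1} \rset$; continuity traps it in a single component of this set, and $\rho \to \infty$ excludes $\lset \rho < 2^k \rset$, so $\rho(t) > 2^{k+1}$ there. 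Continuity of $\rho$, equal to $2^k$ at $\tau_k$ and exceeding $2^{k+1}$ immediately after $\tau_k + T_k$, together with the intermediate value theorem force $\tau_{k+1} \leq \tau_k + T_k = \tau_k + 2^{-kj}T_0$, and telescoping gives
\[
\sup_k \tau_k \leq \tau_{k_0} + T_0 \sum_{\ell=k_0}^\infty 2^{-\ell j} < \infty
\]
because $j > 0$. But a bounded sequence $\tau_k$ must have an accumulation point $\tau^* < \infty$, at which continuity of the flow would force $\rho(\tau^*) = \lim \rho(\tau_k) = \infty$, a contradiction.

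The main obstacle is the cascade step of the last paragraph. Its force depends on both the strength of Lemma \ref{lem:time-in-ring} (the orbit leaves each annulus \emph{permanently}, not merely temporarily) and on the precise scaling $T_k = 2^{-kj}T_0$ of Lemma \ref{lem:which}; the argument visibly collapses without the hypothesis $j > 0$, which is exactly what makes the $T_k$ decay geometrically and so keeps the escape-time series convergent.
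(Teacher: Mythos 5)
Your proof is correct, and its decisive step is the same mechanism as the paper's: the outward dyadic cascade, in which Lemma \ref{lem:time-in-ring} forces the orbit to leave each annulus $\Alpha_{2^k,2^{k+1}}$ permanently within time $T_k$, Lemma \ref{lem:which} gives $T_k=2^{-kj}T_0$, and the hypothesis $j>0$ makes the escape times sum geometrically, so that a trajectory with $\rho(t)\to\infty$ would blow up at a finite time, contradicting globality of the flow (the paper phrases this as $t_0+t_1+\dots+t_n<T_0\lpar 1-2^{-j}\rpar^{-1}$; your accumulation-point formulation via continuity of $\rho$ at $\tau^*$ is the same contradiction). Where you genuinely diverge is in how convergence to $e$ is then extracted. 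The paper argues constructively: after normalising $\dist(p_0,e)=1$, the first exit from $\Alpha_{1,2}$ must occur at radius $1$ (exit at radius $2$ triggers the outward cascade), and the orbit then descends through $\Alpha_{2^k,2^{k+1}}$ for all $k\in-\N$, giving $\rho\to 0$ directly. You instead eliminate all alternatives: nontrivial subsequential limits are killed by Lemma \ref{lem:nice-orbits} combined with the lemma on zeros of $\vect V$ (a combination the paper uses only inside the proof of Lemma \ref{lem:time-in-ring}, not in this lemma's proof), and a $\liminf$/$\limsup$ analysis with the intermediate value theorem and compactness of closed annuli reduces the remaining failure mode to $\rho\to\infty$. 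This buys you freedom from the paper's inward-descent bookkeeping, at the price of an oscillation analysis the paper never needs; your time-reversal reduction is also cleaner than the paper's ``analogous'' remark, and is valid for the simple reason that the flow of $-\vect V$ consists of the very same maps $\Exp(-t\vect V)$, hence is $\lambda$-quasiconformal with the same constant.

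One point to tighten, though it is a repairable slip rather than a gap: your trichotomy is not exhaustive as written, since it omits, for instance, $\rho$ bounded with $\liminf\rho=0<\limsup\rho<\infty$, and $\limsup\rho=\infty$ with $0<\liminf\rho<\infty$; moreover the fixed crossing level $\rho(t_n)=1$ need not be attained if $\limsup\rho<1$. Your own device covers every case if stated uniformly: whenever $\rho\nrightarrow 0$ and $\rho\nrightarrow\infty$, either $\rho$ converges to a finite positive value, or one may choose a level $c$ with $0<c$ and $\liminf\rho<c<\limsup\rho$, which is crossed for arbitrarily large times; either way a subsequence converges to some $p'$ with $\dist(p',e)>0$, and Lemma \ref{lem:nice-orbits} applies as you indicate.
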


\begin{proof}
By dilating if necessary, we may suppose that $\dist(p_0, e) =1$.

Much as in Lemma \ref{lem:time-in-ring}, define
\[
t_0 = \sup\{ t \in [0, \infty) : \Exp(t\vect V)(p_0) \in \Alpha_{1,2} \}.
\]
Let $p_1 = \Exp(t_0\vect V)(p_0)$, and then $\dist(p_1 ,e)$ is either
$1$ or $2$.

Suppose first that $\dist(p_1 ,e) = 2$, so that $p_1 \in \Alpha_{2,4}$.
Define
\[
t_1 = \sup\{ t \in [0, \infty) : \Exp(t\vect V)(p_1) \in \Alpha_{2,4} \},
\]
and $p_2 = \Exp(t_1\vect V)(p_1)$.
Now $\Exp(t\vect V)(p_1)   = \Exp((t_0+t)\vect V)(p_0) > 2$ if $t > 0$, by definition of $t_0$ and the continuity of $t \mapsto \dist(\Exp(t\vect V)(p_0), e)$.
Hence $\dist(p_2, e) > 2$.
By Remark \ref{rem:bla}, $\dist( p_2  ,e) =2$ or $\dist( p_2  ,e) =4$, and the first option is impossible, so $\dist( p_2  ,e) =4$.
We define recursively
\[
t_{k} = \sup\{ t \in [0, \infty) : \Exp(t\vect V)(p_k) \in \Alpha_{2^k,2^{k+1}} \}
\]
and $p_{k+1} = \Exp(t_{k}\vect V)(p_{k})$; by a similar argument, $\dist( p_k  ,e) = 2^k$ and if $t > 0$, then $\dist(\Exp(t\vect V)(p_k), e) > 2^{k}$.
We have shown that if $t > t_0 + t_1 + \dots + t_k$, then $\dist(\Exp(t\vect V)(p_0),e) > 2^{k+1}$,but
\[
t_0 + t_1 + \dots + t_n \leq T_0 + T_1 + \dots + T_n < T_0 \lpar \frac{1}{1-2^{-l}} \rpar.
\]
Hence $\sum_{k=0}^\infty t_k$ is finite, and  $\Exp(t\vect V)(p_0) \to \infty$ as $t \to \sum_{k=0}^\infty t_k$.
By hypothesis, the flow $\Exp(\cdot \vect V)$ is global, so it cannot be true that $\dist(p_1 ,e) = 2$.

Consequently, $\dist(p_1, e) = 1$.
A similar argument shows that $\Exp(t\vect V)(p_0)$ eventually  lies inside $\Alpha_{2^k, 2^{k+1}}$ for arbitrary $k \in -\N$, and hence $\lim_{t \to \infty} \Exp(t\vect V)(p_0) =e$.

The analysis of what happens when $t \to -\infty$ is analogous.
\end{proof}

\begin{lemma}\label{lem;trajectories-bounded}
 Suppose that $\vect V$ is a nonzero polynomial vector field on a Carnot group $G$, that $\vect V$ is homogeneous of strictly positive degree $j$, and $\Exp(\cdot \vect V)$ is a $\lambda$-quasi\-conformal global flow.
 Then the set $\bigcup_{p \in \Alpha_{1,2}} \Exp(\R \vect V)(p)$ is bounded.
\end{lemma}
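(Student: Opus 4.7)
The plan is to prove the stronger statement that $\bigcup_{p \in \Alpha_{1,2}} \Exp(\R \vect V)(p) \subseteq \overline{B(e,2)}$, where $\overline{B(e,2)} := \{q \in G : \dist(q,e) \leq 2\}$. This is bounded, which gives the claim. The argument is a direct extension of the proof of Lemma \ref{lem:trajectories-to-0}, applied uniformly over the annulus and then also to $-\vect V$.

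Fix $p_0 \in \Alpha_{1,2}$. For the forward half of the orbit, set
\[
t_0 := \sup\{t \geq 0 : \Exp(t\vect V)(p_0) \in \Alpha_{1,2}\},
\]
which is finite by Lemma \ref{lem:time-in-ring}. Let $p_1 := \Exp(t_0 \vect V)(p_0)$. By closedness of $\Alpha_{1,2}$ and continuity of the flow, $p_1 \in \Alpha_{1,2}$, and the maximality of $t_0$ forces $\dist(p_1, e) \in \{1, 2\}$ (otherwise the trajectory would remain in the open interior of $\Alpha_{1,2}$ for small $t > t_0$). I would then rule out $\dist(p_1, e) = 2$ by repeating verbatim the inductive construction from Lemma \ref{lem:trajectories-to-0}: construct $(p_k, t_k)_{k\geq 1}$ with $\dist(p_k, e) = 2^k$ and $t_k \leq T_k = 2^{-kj} T_0$ by Lemma \ref{lem:which}. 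Since $j > 0$, the total time satisfies $\sum_{k \geq 0} t_k \leq T_0/(1 - 2^{-j}) < \infty$, while $p_{k+1} = \Exp((t_0 + \cdots + t_k)\vect V)(p_0)$ has distance $2^{k+1}$ from $e$; this produces blow-up in finite time, contradicting globality of $\Exp(\cdot \vect V)$.

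Hence $\dist(p_1, e) = 1$. I would next observe that $\dist(\Exp(t\vect V)(p_0), e) < 1$ for every $t > t_0$: by continuity the distance is less than $1$ just after $t_0$ (it cannot exceed $1$ there without re-entering $\Alpha_{1,2}$, which contradicts the choice of $t_0$), and any later time at which the distance returned to $1$ would reinsert the trajectory into $\Alpha_{1,2}$, again contradicting the supremum. Thus the forward orbit lies in $\overline{B(e,2)}$, with distance $\leq 2$ on $[0, t_0]$ and distance $< 1$ afterwards. For the backward half, I would apply the same reasoning to $-\vect V$, which is polynomial, homogeneous of the same strictly positive degree $j$, and whose flow $\Exp(t(-\vect V)) = \Exp(-t\vect V)$ is global and quasi\-conformal (the inverse of a quasi\-conformal map is quasi\-conformal, as noted after the Heinonen--Koskela theorem). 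This gives $\dist(\Exp(-t \vect V)(p_0), e) \leq 2$ for all $t \geq 0$, so the entire bi-infinite orbit of $p_0$ sits in $\overline{B(e,2)}$.

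Taking the union over $p_0 \in \Alpha_{1,2}$ yields the desired boundedness. The main subtle point is the "no-return" claim: that after exiting $\Alpha_{1,2}$ through its inner boundary at time $t_0$, the trajectory cannot later cross back through $\Alpha_{1,2}$ and escape to $\dist > 2$. This is not difficult — it follows from continuity of $t \mapsto \dist(\Exp(t\vect V)(p_0), e)$ together with the definition of $t_0$ as a \emph{supremum} (not merely a first exit time) — but it is the one step where the proof genuinely uses more than the $\dist(p_0,e) = 1$ case treated in Lemma \ref{lem:trajectories-to-0}.
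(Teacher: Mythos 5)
Your handling of the orbit \emph{after} the final exit time $t_0$ is fine (the ``no-return'' claim that $\dist(\Exp(t\vect V)(p_0),e)<1$ for all $t>t_0$ is correct, as is the treatment of backward time via $-\vect V$), but the assertion that the distance is $\leq 2$ on $[0,t_0]$ is a genuine gap --- and it is precisely the hard part of the lemma. Since $t_0$ is a \emph{supremum}, the trajectory may leave $\Alpha_{1,2}$ through the outer boundary at an intermediate time, make an excursion of a priori unbounded size, and re-enter the annulus before its final exit at time $t_0$. The blow-up argument you import from Lemma \ref{lem:trajectories-to-0} does not exclude such excursions: the inductive construction of points $p_k$ with $\dist(p_k,e)=2^k$ starts from the fact that the outer-boundary exit happens at the supremum time, which guarantees $\dist(\Exp(t\vect V)(p_1),e)>2$ for \emph{all} $t>0$; only this permanence forces each subsequent annulus $\Alpha_{2^k,2^{k+1}}$ to be left through its outer boundary, producing escape in finite total time $\leq T_0\sum_k 2^{-kj}$. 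A temporary crossing of the sphere of radius $2$ that later returns gives no such permanence and hence no contradiction with globality, so your proof places no bound at all on these intermediate excursions --- let alone a bound uniform in $p_0\in\Alpha_{1,2}$, which is what the lemma asserts.

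The paper closes exactly this hole by a different mechanism: using Lemma \ref{lem:trajectories-to-0} and the uniform time bound $T_0$ from Lemma \ref{lem:time-in-ring}, it shows that $\dist(\Exp(t\vect V)(q),e)<1$ for all $t\notin[-T_0,T_0]$ (this is \eqref{eq:long-time}), and then controls the possibly large excursions on the \emph{compact} time window $[-T_0,T_0]$ by continuous dependence on initial conditions together with a finite subcover of the compact annulus $\Alpha_{1,2}$; the resulting uniform bound is $\max_i D_{p_i}+1$, which need not be $2$. It is worth noting that your stronger conclusion, that the union lies in $\overline{B(e,2)}$, is in fact true --- but only \emph{a posteriori}: if some orbit through $\Alpha_{1,2}$ reached distance $a>2$, then its images under $\delta_{2^k}$ are again orbits of $\vect V$ (up to time reparametrisation, as in Lemma \ref{lem:which}) which tend to $e$ at both ends and hence cross $\Alpha_{1,2}$, yet reach distance $2^k a\to\infty$, contradicting the boundedness that the lemma itself provides. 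Since this derivation presupposes the lemma, it cannot be used to repair your argument; as written, the proposal does not prove the statement.
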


\begin{proof}
As before, for $p \in \Alpha_{1,2}$, we define
 \[
 t_p = \sup\{ t \in [0, \infty) : \Exp(t \vect V)(p) \in \Alpha_{1,2} \} ,
 \]
 and $T_0 = \sup\{ t_p : p \in \Alpha_{1,2}\}$.

 Take $q \in \Alpha_{1,2}$.
 If $t \notin [-T_0, T_0]$, then $\Exp(t \vect V)(q) \notin \Alpha_{1,2} $.
 It cannot be true that $\dist(\Exp(t \vect V)(q),e) > 2$ for any such $t$, since $\lim_{t \to \pm\infty} \Exp(t\vect V)(q) = e$.
 Hence
\begin{equation}\label{eq:long-time}
 \dist(\Exp(t \vect V)(q),e) < 1
 \quad\forall t \in \R \setminus [-T_0, T_0].
\end{equation}
Fix $p \in \Alpha_{1,2}$.
Since $\lim_{t \to \pm\infty}\Exp(t\vect V)(p) = e$ and the flow is continuous, the trajectory $\Exp(\R \vect V)(p)$ is bounded, that is, there exists $D_p \in \R^+$ such that
 \[
 \dist(\Exp(t\vect V)(p), e) \leq D_p
 \quad\forall t \in \R.
 \]
By the theory of ordinary differential equations, there exists $\delta_p \in \R^+$ such that if $\dist (q, p) < \delta_p$, then $\dist(\Exp(t\vect V)(q), \Exp(t\vect V)(p)) < 1$ for all $t \in [-T_0, T_0]$.
Thus for these $q$,
\[
 \dist(\Exp(t\vect V)(q), e) \leq D_p +1
 \quad\forall t \in [-T_0, t_0].
 \]
 However the same inequality holds when $t \notin [-T_0, T_0]$, by \eqref{eq:long-time}.

Let $\Omega_p = \{ q \in \Alpha_{1,2} : \dist(p,q) < \delta_p\}$.
The sets $\Omega_p$ form an open cover of $\Alpha_{1,2}$ in the relative topology, as $p$ varies over the compact set $\Alpha_{1,2}$.
Thus there is a finite subcover, $\{ \Omega_{p_1}, \dots, \Omega_{p_n} \}$ say.
It follows that
\[
 \dist(\Exp(t\vect V)(q), e) \leq \max\{ D_{p_1}, \dots , D_{p_n} \} +1
 \quad\forall t \in \R. \quad\forall q \in \Alpha_{1,2}.
\]
This is what we needed to prove.
\end{proof}

\begin{proof}[Proof of Theorem \ref{thm:qcf-implies-affine}]
 Suppose that $\vect V$ is a polynomial pre\-quasi\-conformal vector field on $G$.
 By Lemma \ref{lem:V-homogeneous}, we may suppose that $\vect V$ is homogeneous of degree $j$, where $j$ is the highest degree  of a homogeneous component of $\vect V$.
 If $j > 0$, then every trajectory of $\vect V$ starts and ends at $e$, by Lemma \ref{lem:trajectories-to-0}.
 Further, the trajectories that pass through $\Alpha_{1,2}$ are uniformly bounded, by Lemma \ref{lem;trajectories-bounded}.
 The trajectories through a point outside $\bigcup_{p \in \Alpha_{1,2}} \Exp(\R \vect V)(p)$ cannot pass through $\Alpha_{1,2}$ but have to tend to $e$, and we have a contradiction.
 Hence $j \leq 0$.
\end{proof}


\begin{corollary}
 Suppose that $G$ is a rigid Carnot group.
Then the set of pre\-quasi\-conformal vector fields on $G$ is a Lie algebra, isomorphic to  $\Lie{g}+ \Lie{g}_0$.
\end{corollary}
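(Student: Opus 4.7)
The plan is to combine Theorem \ref{thm:qcf-implies-affine} with the structure of the Tanaka prolongation in the rigid setting. When $G$ is rigid, the full prolongation $\Prol(\Lie{g},\Lie{g}_0) = \bigoplus_j \Lie{g}_j$ is a finite-dimensional graded Lie algebra that coincides with the space of all precontact vector fields on $G$, each of which is polynomial; homogeneous components of degree $j$ correspond to vectors in $\Lie{g}_j$.

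First I would establish the inclusion of pre-quasi-conformal vector fields into $\Lie{g} + \Lie{g}_0$. If $\vect V$ is pre-quasi-conformal then, by definition, each $\Exp(t\vect V)$ is a globally defined $\lambda$-quasi-conformal contact diffeomorphism, so $\vect V$ is precontact. Rigidity then forces $\vect V$ to be polynomial, and Theorem \ref{thm:qcf-implies-affine} places $\vect V$ in $\Lie{g} + \Lie{g}_0 = \bigoplus_{j \leq 0} \Lie{g}_j$. The grading $[\Lie{g}_i,\Lie{g}_j] \subseteq \Lie{g}_{i+j}$ shows moreover that $\Lie{g} + \Lie{g}_0$ is closed under the prolongation bracket, hence a Lie subalgebra of $\Prol(\Lie{g},\Lie{g}_0)$.

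For the reverse inclusion, I would show that every vector field of the form $\breve U + \breve D$, with $U \in \Lie{g}$ and $D \in \Lie{g}_0 = \Der^\delta(\Lie{g})$, is pre-quasi-conformal. The right-invariant field $\breve U$ generates the left-translation flow $p \mapsto \exp(-tU)\,p$, which is an isometry, hence $1$-quasi-conformal with a constant independent of $t$. The field $\breve D$ generates a one-parameter group $\Phi_t$ of strata-preserving automorphisms of $G$; because $\Phi_t$ is a group homomorphism that commutes with dilations, its distortion $H(\Phi_t,p,s)$ is independent of the base point $p$ and the scale $s$, and equals the ratio of the largest to smallest singular values of $\Phi_t|_{\Lie{g}_{-1}}$. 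Using the semidirect product structure of the affine group $G \rtimes \Aut^\delta(G)$, one writes $\Exp(t(\breve U + \breve D))(p) = u(t)\,\Phi_t(p)$ for some curve $u(t) \in G$, so the distortion of the full flow is controlled by that of $\Phi_t$ alone.

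The main obstacle is then the uniform bound on the distortion of $\Phi_t$ as $t$ ranges over $\R$. This requires a structural input from rigidity, namely that every $D \in \Der^\delta(\Lie{g})$ acts on $\Lie{g}_{-1}$ with all eigenvalues having a common real part; equivalently, $\Der^\delta(\Lie{g})$ splits as $\R\,\xi \oplus \Lie{s}$, where $\xi$ is the grading derivation and $\Lie{s}$ preserves the chosen scalar product on $\Lie{g}_{-1}$. Granting this, $\Phi_t$ factors as a dilation $\delta_{e^{ct}}$ composed with a bounded family of transformations, so the singular-value ratio is bounded uniformly in $t$ and $\breve U + \breve D$ is pre-quasi-conformal. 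Combining the two inclusions identifies the set of pre-quasi-conformal vector fields with the Lie subalgebra $\Lie{g} + \Lie{g}_0$ of $\Prol(\Lie{g},\Lie{g}_0)$, proving the corollary.
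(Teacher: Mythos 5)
Your first inclusion is exactly the paper's proof: a pre\-quasi\-conformal field is precontact, rigidity makes precontact fields polynomial, and Theorem \ref{thm:qcf-implies-affine} then places the field in $\Lie{g}+\Lie{g}_0$, which is a subalgebra by the grading. Indeed the paper's entire proof is this one observation, with the reverse inclusion treated as immediate: a field in $\Lie{g}+\Lie{g}_0$ generates a global flow of affine maps (left translations composed with strata-preserving automorphisms), and such maps are quasi\-conformal, so no structural analysis of $\Der^\delta(\Lie{g})$ is invoked at all.

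The genuine gap is your ``structural input from rigidity''. It is false that rigidity forces every $D\in\Der^\delta(\Lie{g})$ to act on $\Lie{g}_{-1}$ with eigenvalues of a common real part, or that $\Der^\delta(\Lie{g})$ splits as $\R\,\xi\oplus\Lie{s}$ with $\Lie{s}$ preserving the scalar product. The paper's own Example in Section 3 supplies counterexamples: if $\Lie{g}$ is the nilradical of a minimal parabolic subalgebra of a simple Lie algebra of real rank at least $3$, then the corresponding group is rigid, yet $\Der^\delta(\Lie{g})$ contains the Cartan subspace $\Lie{a}$, which acts on the first stratum $\bigoplus_{\alpha}\Lie{g}_{\alpha}$ (sum over simple restricted roots) by the scalars $\alpha(H)$; for generic $H\in\Lie{a}$ these are distinct real numbers, so the singular-value ratio of $e^{tH}\vert_{\Lie{g}_{-1}}$ tends to infinity with $t$, and no uniform bound on the distortion of $\Phi_t$ is obtainable by your route. (Your underlying worry is not frivolous: the paper's definition of pre\-quasi\-conformal demands a single constant $\lambda$ valid for all $\Exp(tV)$ simultaneously, and for a derivation such as $H$ above the flow is only $\lambda(t)$-quasi\-conformal with $\lambda(t)\to\infty$; the paper's one-line proof passes over this uniformity. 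But the repair cannot be the eigenvalue claim you posit, which fails on precisely the rigid groups the paper highlights. Moreover your ``equivalently'' is wrong on its own terms: a common real part permits nilpotent Jordan parts, which already give unbounded distortion, while conversely boundedness does not require $\Lie{s}$ to be skew with respect to the \emph{chosen} inner product.) The correct reading of the corollary is the paper's: the content is the inclusion of pre\-quasi\-conformal fields into $\Lie{g}+\Lie{g}_0$ via Theorem \ref{thm:qcf-implies-affine}, with quasi\-conformality of the affine flows taken as the elementary converse.
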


\begin{proof}
Every pre\-quasi\-conformal vector field is precontact and hence polynomial.
The result follows immediately.
\end{proof}

\section{Global quasi\-conformal maps on rigid Carnot groups}

In this section, we state and prove our characterization of global quasi\-conformal mappings of rigid Carnot groups, which is a consequence of Theorem \ref{thm:qcf-implies-affine}.
Again, recall that $\Lie{g}_0= \Der^\delta(\Lie{g})$.

We recall that the nilradical is the maximal nilpotent ideal of a Lie algebra.
Every  Lie algebra homomorphism sends the nilradical into the nilradical, so every Lie algebra automorphism preserves the nilradical.

\begin{theorem}\label{global-qc-maps}
Let $G$ be a rigid Carnot group, and let $f:G \rightarrow G$ be a $C^2$ map.
Then $f$ is global $\lambda$-quasi\-conformal if and only if $f$ is the composition of a left trans\-lation and a strata-preserving automorphism.
\end{theorem}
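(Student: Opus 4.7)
The ``if'' direction is straightforward: left translations are isometries of $\dist$, hence $1$-quasi\-conformal, while each $\sigma \in \Aut^\delta(G)$ commutes with dilations and restricts to a linear automorphism of $\Lie{g}_{-1}$, so $\sigma$ is bi-Lipschitz with respect to $\dist$ with constant controlled by the operator norm of $(\sigma_*)_e|_{\Lie{g}_{-1}}$; a composition of the two is therefore $\lambda$-quasi\-conformal for a suitable $\lambda$.

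For the converse, let $f : G \to G$ be a $C^2$ global $\lambda$-quasi\-conformal map.  Composing on the left with the isometry $L_{f(e)^{-1}}$, we may assume $f(e) = e$; the goal is to show $f \in \Aut^\delta(G)$.  Standard Pansu differentiability arguments combined with Lemma \ref{lem:Pansu-is-part-of-deriv} show that $f$ is a contact diffeomorphism, and $f^{-1}$ has the same properties.

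The key step is to note that conjugation by $f$ preserves the Lie algebra $\mathcal{L}$ of pre\-quasi\-conformal vector fields: if $\vect V \in \mathcal{L}$ has global $\mu$-quasi\-conformal flow $\Exp(t\vect V)$, then $f \circ \Exp(t\vect V) \circ f^{-1}$ is a global $\lambda^2\mu$-quasi\-conformal flow, so its generator $f_* \vect V$ lies in $\mathcal{L}$; applying the same argument to $f^{-1}$ shows that $f_* : \mathcal{L} \to \mathcal{L}$ is a Lie algebra automorphism.  By the Corollary to Theorem \ref{thm:qcf-implies-affine}, $\mathcal{L}$ is canonically $\Lie{g} + \Lie{g}_0$, where $\Lie{g}$ corresponds to the right-invariant fields $\breve Y$ (whose flows are left translations) and $\Lie{g}_0 = \Der^\delta(\Lie{g})$ corresponds to the fields $\bar D$ generating one-parameter subgroups of strata-preserving automorphisms, all of which vanish at $e$.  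Since $f(e) = e$, $f_*$ sends vector fields vanishing at $e$ to vector fields vanishing at $e$, so $f_*(\Lie{g}_0) = \Lie{g}_0$.  Moreover, $\Lie{g}$ is the nilradical of $\mathcal{L}$: it is a nilpotent ideal, and the grading element in $\Lie{g}_0$ acts on $\Lie{g}_{-k}$ as $-k\,\id$ and is not ad-nilpotent, which prevents any strictly larger nilpotent ideal.  Consequently $f_*(\Lie{g}) = \Lie{g}$.

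For each $Y \in \Lie{g}$ there is thus a unique $\psi(Y) \in \Lie{g}$ with $f \circ L_{\exp tY} \circ f^{-1} = L_{\exp t\psi(Y)}$; equivalently, $f(\exp(tY)\cdot q) = \exp(t\psi(Y))\cdot f(q)$ for all $q \in G$.  Setting $q = e$ yields $f(\exp tY) = \exp t\psi(Y)$, so the identity reduces to $f(\exp(tY)\cdot q) = f(\exp tY) \cdot f(q)$.  Because every element of $G$ is of the form $\exp Y$ for some $Y \in \Lie{g}$, this gives $f(pq) = f(p)f(q)$ for all $p, q \in G$, so $f$ is a Lie group automorphism; being also a contact map, $f$ is strata-preserving by Lemma \ref{lem:good-automorphisms}.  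The main obstacle is the nilradical identification in the third paragraph: while $\Lie{g}$ is manifestly a nilpotent ideal, verifying that $\Lie{g}_0 = \Der^\delta(\Lie{g})$ contains no nonzero ideal whose elements act nilpotently on $\Lie{g}$ demands the fine structural information on $\Der^\delta(\Lie{g})$ provided by rigidity via the Tanaka prolongation.
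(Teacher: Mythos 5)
Your proposal is correct and follows essentially the same route as the paper's proof: conjugating the flows of pre\-quasi\-conformal vector fields by $f$ gives an automorphism of $\Lie{g} + \Lie{g}_0$ (via the Corollary to Theorem \ref{thm:qcf-implies-affine}), which fixes $\Lie{g}_0$ because $f(e) = e$ and fixes the nilradical $\Lie{g}$, after which $f$ is shown to be an automorphism and then strata-preserving by Lemma \ref{lem:good-automorphisms}. Your final step, deriving multiplicativity $f(pq) = f(p)f(q)$ directly from the intertwining of left translations, is a cosmetic variant of the paper's construction of an automorphism $T$ with $dT = \breve f$ and the identity $f(\exp(-tU)) = T\exp(-tU)$.

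One caveat, at the point you yourself flag. Your grading-element argument for the nilradical identification proves too little: the fact that the grading derivation is not ad-nilpotent only excludes nilpotent ideals \emph{containing} that element, whereas the danger is a nonzero ideal $\Lie{m}$ of $\Der^\delta(\Lie{g})$ consisting of nilpotent strata-preserving derivations, in which case $\Lie{g} + \Lie{m}$ is a nilpotent ideal strictly larger than $\Lie{g}$ (by Engel, since all its elements are ad-nilpotent). Such ideals do exist for some stratified algebras: for $\Lie{g}_{-1} = \Span\lset X, Y, W\rset$, $\Lie{g}_{-2} = \Span\lset Z \rset$ with $[X,Y] = Z$ and $W$ central, the derivations $X \mapsto v_1 W$, $Y \mapsto v_2 W$, $W, Z \mapsto 0$ form a two-dimensional ideal of $\Der^\delta(\Lie{g})$ of nilpotent derivations, so the statement ``$\Lie{g}$ is the nilradical of $\Lie{g} + \Der^\delta(\Lie{g})$'' is false without some extra hypothesis (this example is nonrigid, so it does not contradict the theorem, but it kills the grading-element argument). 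You acknowledge this honestly, and the paper itself simply asserts the nilradical identification without proof, so your write-up matches the paper's level of rigour here; but if you want a complete argument, this is the step that still needs the structural input of rigidity, and it is the only genuine gap in your proposal.
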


\begin{proof}
As it is evident that affine maps are quasi\-conformal, we prove only the forward implication.
Suppose that $f: G \to G$ is $\lambda$-quasi\-conformal.
By composing with a left translation, we may suppose that $f(e) = e$; we must show that $f$ is a strata-preserving automorphism.

Each pre\-quasi\-conformal vector field $\breve U$ on $G$ induces a flow $\Exp(\cdot \breve U)$ on $G$; we define a new quasi\-conformal flow $\psi_t$ on $G$ by conjugation
\begin{equation}\label{eq:conjugation-by-f}
\psi_t = f \circ \Exp(t \breve U) \circ f^{-1}.
\end{equation}
Differentiation with respect to $t$ yields a new pre\-quasi\-conformal vector field $\breve V$ on a neighbourhood of $e$ such that $\psi_t = \Exp(\cdot \breve V)$.
Further, if $\Exp(t \breve U)$ fixes $e$ for all $t$, then so does $\Exp(t\breve V)$.

We write $\breve f$ for the map $\breve U  \mapsto \breve V$, and also for the corresponding map $U \mapsto V$ of $\Lie{g} + \Lie{g}_0$.
Then $\breve f: U \mapsto V$ is an automorphism of the algebra $\Lie{g} + \Lie{g}_0$, and leaves invariant the subalgebra $\Lie{g}_0$ corresponding to vector fields that vanish at $e$.
Being an automorphism, it also leaves invariant the nilradical $\Lie{g}$.
From \eqref{eq:conjugation-by-f}, we see that
\begin{equation}\label{eq:breve-f}
\Exp(t {\breve f(\breve U)}) (e) =  f( \Exp(t \breve U) (e))
\quad\forall t \in \R.
\end{equation}
Clearly there is an automorphism $T$ of $G$, not necessarily strata-preserving, such that $dT = \breve f$.

Now $\Exp(t\breve{W})(e) = \exp(-t W)$ for all $W \in \Lie{g}$.
From \eqref{eq:breve-f}, it follows that
\[
 f(\exp(-t U)) = \exp(-t \breve f (U)) = T \exp(-t U)
\]
for all $U$ in $\Lie{g}$ and all $t \in  \R$, that is, $f$ is an automorphism.
Since $f$ is also a contact map, the automorphism is strata-preserving, by Lemma \ref{lem:good-automorphisms}. 
\end{proof}

\begin{corollary}
 Let $G$ be a rigid Carnot group, and let $f:G \rightarrow G$ be a $C^2$ map.
Then $f$ is global bi-Lipschitz if and only if $f$ is the composition of a left trans\-lation and a strata-preserving automorphism.
\end{corollary}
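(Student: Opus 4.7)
The plan is to reduce this statement directly to Theorem \ref{global-qc-maps}. For the forward implication, I would observe that any globally bi-Lipschitz map $f$ with
\[
L_1 \, \dist(x,y) \leq \dist( f(x), f(y) ) \leq L_2 \, \dist(x,y)
\qquad \forall x, y \in G
\]
satisfies $H(f, p, s) \leq L_2/L_1$ for every $p \in G$ and every scale $s$, because both the supremum and the infimum in the distortion quotient are pinched between $L_1 s$ and $L_2 s$. In particular, the $\limsup$ as $s \to 0^+$ is at most $L_2/L_1$, so $f$ is $(L_2/L_1)$-quasi\-conformal, and Theorem \ref{global-qc-maps} immediately delivers the required decomposition as a left translation composed with a strata-preserving automorphism.

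The converse direction requires verifying that every such composition is in fact globally bi-Lipschitz. Since left translations are isometries of $(G, \dist)$ by construction of the distance, it suffices to treat a single strata-preserving automorphism $T$. Because $T$ is a homomorphism, left-invariance gives
\[
\dist( Tp, Tq ) = \dist( e, (Tp)^{-1}(Tq) ) = \dist( e, T(p^{-1} q) ),
\]
so the problem reduces to producing a two-sided bound of the form $c \, \dist(e, x) \leq \dist(e, Tx) \leq C \, \dist(e, x)$ for all $x \in G$, with $c, C \in \R^+$ depending only on $T$.

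To obtain this bound, I would exploit that $T$ commutes with the dilations $\delta_s$ (property (iv) of Lemma \ref{lem:good-automorphisms}) and that $\dist$ is $1$-homogeneous, $\dist(\delta_s p, e) = s \, \dist(p, e)$. For $x \neq e$, writing $x = \delta_{\dist(e, x)} y$ with $y$ on the unit sphere $S = \{ q \in G : \dist(q, e) = 1 \}$, I obtain
\[
\dist(e, Tx) = \dist(e, \delta_{\dist(e,x)} T y) = \dist(e, x) \cdot \dist(e, T y).
\]
The continuous function $y \mapsto \dist(e, T y)$ is strictly positive on $S$ since $T$ is a continuous bijection fixing $e$, and $S$ is compact because the topology induced by $\dist$ coincides with the ambient manifold topology (a consequence of the Nagel--Stein--Wainger equivalence with $\distNSW$). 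Hence this function attains a positive minimum $c$ and a finite maximum $C$ on $S$, giving the desired Lipschitz constants for $T$ and, by symmetry, for $T^{-1}$. The only step that requires any real care is the compactness of $S$, which is already implicitly used throughout the excerpt; everything else is a direct consequence of Theorem \ref{global-qc-maps} and of the definitions.
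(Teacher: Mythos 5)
Your proposal is correct and takes essentially the same route as the paper, which states this corollary without proof as an immediate consequence of Theorem \ref{global-qc-maps}: a globally bi-Lipschitz map has distortion $H(f,p,s) \leq L_2/L_1$ at every point and scale, hence is globally $(L_2/L_1)$-quasi\-conformal, exactly as you argue. Your converse verification (left translations are isometries of $\dist$, and a strata-preserving automorphism $T$ is bi-Lipschitz because it commutes with dilations, so that $\dist(e,Tx) = \dist(e,x)\,\dist(e,Ty)$ with $y$ on the compact unit sphere, where the continuous positive function $y \mapsto \dist(e,Ty)$ attains a positive minimum and finite maximum) is just a careful filling-in of the easy direction the paper leaves implicit.
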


\bibliography{Biblio-Global-QC}

\providecommand{\bysame}{\leavevmode\hbox to3em{\hrulefill}\thinspace}
\providecommand{\MR}{\relax\ifhmode\unskip\space\fi MR }
\providecommand{\MRhref}[2]{%
  \href{http://www.ams.org/mathscinet-getitem?mr=#1}{#2}
}
\providecommand{\href}[2]{#2}
\begin{thebibliography}{10}

\bibitem{Cap-Slovak}
A.~{\v{C}}ap and J.~Slov{\'a}k, \emph{Parabolic {G}eometries. {I}: Background
  and general theory}, Mathematical Surveys and Monographs, vol. 154, American
  Mathematical Society, Providence, RI, 2009.

\bibitem{Capogna-Cowling}
L.~Capogna and M.~G. Cowling, \emph{Conformality and {$Q$}-harmonicity in
  {C}arnot groups}, Duke Math. J. \textbf{135} (2006), no.~3, 455--479.

\bibitem{Capogna_LeDonne}
L.~Capogna and E.~Le~Donne, \emph{Smoothness of subriemannian isometries},
  submitted (2013).

\bibitem{CDeKR1}
M.~G. Cowling, F.~De~Mari, A.~Kor{\'a}nyi, and H.~M. Reimann, \emph{Contact and
  conformal maps on {I}wasawa {$N$} groups}, Atti Accad. Naz. Lincei Cl. Sci.
  Fis. Mat. Natur. Rend. Lincei (9) Mat. Appl. \textbf{13} (2002), no.~3--4,
  219--232, Harmonic analysis on complex homogeneous domains and Lie groups
  (Rome, 2001).

\bibitem{CDeKR2}
\bysame, \emph{Contact and conformal maps in parabolic geometry. {I}}, Geom.
  Dedicata \textbf{111} (2005), 65--86.

\bibitem{Cowling-Ottazzi}
M.~G. Cowling and A.~Ottazzi, \emph{Conformal maps of {C}arnot groups}, to
  appear in Ann. Acc. Sci. Acc. Fenn.

\bibitem{Freeman0}
D.~M. Freeman, \emph{Invertible {C}arnot groups}, Preprint.

\bibitem{Heinonen-Koskela-Carnot}
J.~Heinonen and P.~Koskela, \emph{Definitions of quasiconformality}, Invent.
  Math. \textbf{120} (1995), no.~1, 61--79.

\bibitem{Heinonen-Koskela-CG}
\bysame, \emph{Quasiconformal maps in metric spaces with controlled geometry},
  Acta Math. \textbf{181} (1998), no.~1, 1--61.

\bibitem{Koranyi-Reimann1}
A.~Kor{\'a}nyi and H.~M. Reimann, \emph{Quasiconformal mappings on the
  {H}eisenberg group}, Invent. Math. \textbf{80} (1985), no.~2, 309--338.

\bibitem{Koranyi-Reimann2}
\bysame, \emph{Foundations for the theory of quasiconformal mappings on the
  {H}eisenberg group}, Adv. Math. \textbf{111} (1995), no.~1, 1--87.

\bibitem{LeDonne-Ottazzi}
E.~Le~Donne and A.~Ottazzi, \emph{Isometries between open sets of {C}arnot
  groups and global isometries of sub{F}insler homogeneous manifolds},
  Preprint, submitted (2012).

\bibitem{Magnani-Lipschitz}
V.~Magnani, \emph{Contact equations, {L}ipschitz extensions and isoperimetric
  inequalities}, Calc. Var. Partial Differential Equations \textbf{39} (2010),
  no.~1--2, 233--271.

\bibitem{nagelstwe}
A.~Nagel, E.~M. Stein, and S.~Wainger, \emph{Balls and metrics defined by
  vector fields. {I}. {B}asic properties}, Acta Math. \textbf{155} (1985),
  no.~1--2, 103--147.

\bibitem{Ottazzi-Hess}
A.~Ottazzi, \emph{Multicontact vector fields on {H}essenberg manifolds}, J. Lie
  Theory \textbf{15} (2005), no.~2, 357--377.

\bibitem{Ottazzi-Warhurst-1}
A.~Ottazzi and B.~Warhurst, \emph{Algebraic prolongation and rigidity of
  {C}arnot groups}, Monatsh. Math. \textbf{162} (2011), no.~2, 179--195.

\bibitem{Ottazzi_Warhurst}
\bysame, \emph{Contact and 1-quasiconformal maps on {C}arnot groups}, J. Lie
  Theory \textbf{21} (2011), no.~4, 787--811.

\bibitem{Pansu}
P.~Pansu, \emph{M\'etriques de {C}arnot-{C}arath\'eodory et quasiisom\'etries
  des espaces sym\'etriques de rang un}, Ann. of Math. (2) \textbf{129} (1989),
  no.~1, 1--60.

\bibitem{Reimann-Htype}
H.~M. Reimann, \emph{Rigidity of {$H$}-type groups}, Math. Z. \textbf{237}
  (2001), no.~4, 697--725.

\bibitem{Tanaka2}
N.~Tanaka, \emph{On differential systems, graded {L}ie algebras and
  pseudogroups}, J. Math. Kyoto Univ. \textbf{10} (1970), 1--82.

\bibitem{Warhurst-filiform}
B.~Warhurst, \emph{Contact and quasiconformal mappings on real model filiform
  groups}, Bull. Austral. Math. Soc. \textbf{68} (2003), no.~2, 329--343.

\bibitem{Warhurst-jet}
\bysame, \emph{Jet spaces as nonrigid {C}arnot groups}, J. Lie Theory
  \textbf{15} (2005), no.~1, 341--356.

\bibitem{Warhurst-free}
\bysame, \emph{Tanaka prolongation of free {L}ie algebras}, Geom. Dedicata
  \textbf{130} (2007), 59--69.

\bibitem{Warhurst-2}
\bysame, \emph{Contact and {P}ansu differentiable maps on {C}arnot groups},
  Bull. Aust. Math. Soc. \textbf{77} (2008), no.~3, 495--507.

\bibitem{Warner}
F.~W. Warner, \emph{Foundations of {D}ifferentiable {M}anifolds and {L}ie
  {G}roups}, Graduate Texts in Mathematics, vol.~94, Springer-Verlag, New York,
  1983, Corrected reprint of the 1971 edition.

\bibitem{Xie-filiform}
X.~Xie, \emph{Quasiconformal maps on model filiform groups}, arXiv:1308.3027
  (2013).

\bibitem{Yamaguchi}
K.~Yamaguchi, \emph{Differential systems associated with simple graded {L}ie
  algebras}, Progress in {D}ifferential {G}eometry, Adv. Stud. Pure Math.,
  vol.~22, Math. Soc. Japan, Tokyo, 1993, pp.~413--494.

\end{thebibliography}
\bibliographystyle{amsplain}
\end{document}